\newtheorem{prop}{Proposition}
\newtheorem{thm}{Theorem}
\newtheorem{cor}{Corollary}
\newtheorem{lemma}{Lemma}
\theoremstyle{definition}
\newtheorem{defn}{Definition}
\newtheorem{example}{Example}
\newcommand\A{{\mathbb A}}
\newcommand\C{{\mathbb C}}
\newcommand\N{{\mathbb N}}
\newcommand{\cN}{{\mathcal N}}
\newcommand{\ti}{\vartheta}
\newcommand{\Ti}{\Theta}
\newcommand\cC{{\mathcal C}}
\newcommand\cW{{\mathcal W}}
\newcommand\Z{{\mathbb Z}}
\newcommand\cP{{\mathcal P}}
\newcommand\cQ{{\mathcal Q}}
\newcommand\cR{{\mathcal R}}
\newcommand\Eta{H}
\newcommand\DS{{\mathfrak D}}
\newcommand\al{\alpha}
\newcommand{\be}{\beta}
\newcommand\la{\lambda}
\newcommand\om{{\omega}}
\newcommand\Om{{\Omega}}
\newcommand\ssm{\smallsetminus}
\newcommand\gequ{\geq}
\newcommand\noin{\noindent}
\newcommand\eqto{\stackrel{\lower1.5pt\hbox{$\scriptstyle\sim\,$}}\to}
\newcommand\ov{\overline}
\newcommand\rsa{\rightsquigarrow}
\newcommand\wh{\widehat}
\newcommand\wt{\widetilde}
\newcommand\dis{\displaystyle}
 \DeclareMathOperator{\SO}{SO}
 \DeclareMathOperator{\OG}{OG}
\DeclareMathOperator{\HH}{\mathrm{H}} 
\DeclareMathOperator{\type}{\mathrm{type}}
\newcommand{\ignore}[1]{}
\newcommand{\pic}[2]{\includegraphics[scale=#1]{#2}}
\begin{document}

\title[A tableau formula for eta polynomials]
{A tableau formula for eta polynomials}

\date{November 18, 2013}

\author{Harry~Tamvakis} \address{University of Maryland, Department of
Mathematics, 1301 Mathematics Building, College Park, MD 20742, USA}
\email{harryt@math.umd.edu}

\subjclass[2000]{Primary 05E15; Secondary 14M15, 14N15, 05E05}

\thanks{The author was supported in part by NSF Grants DMS-0901341
and DMS-1303352}

\begin{abstract}
We use the Pieri and Giambelli formulas of \cite{BKT1, BKT3} and the
calculus of raising operators developed in \cite{BKT2, T1} to prove a
tableau formula for the eta polynomials of \cite{BKT3} and the Stanley
symmetric functions which correspond to Grassmannian elements of the
Weyl group $\wt{W}_n$ of type $\text{D}_n$. We define the {\em skew
  elements} of $\wt{W}_n$ and exhibit a bijection between the set of
reduced words for any skew $w\in \wt{W}_n$ and a set of certain
standard typed tableaux on a skew shape $\la/\mu$ associated to $w$.
\end{abstract}

\maketitle

\section{Introduction}

Let $k$ and $n$ be positive integers with $k\leq n$ and
$\OG=\OG(n+1-k,2n+2)$ be the even orthogonal Grassmannian which
parametrizes isotropic subspaces of dimension $n+1-k$ in a complex
vector space of dimension $2n+2$, equipped with a nondegenerate
symmetric bilinear form.  Following \cite{BKT1}, the Schubert classes
$\sigma_\lambda$ in $\HH^*(\OG,\Z)$ are indexed by {\em typed
  $k$-strict partitions} $\la$. Recall that a $k$-strict partition is
an integer partition $\la=(\la_1,\ldots,\la_\ell)$ such that all parts
$\la_i$ greater than $k$ are distinct. A typed $k$-strict partition is
a pair consisting of a $k$-strict partition $\la$ together with an
integer $\type(\la)\in \{0,1,2\}$ which is positive if and only if
$\la_i=k$ for some index $i$.

In \cite{BKT3}, we discovered a remarkable connection between the
cohomology of even (type D) and odd (type B) orthogonal Grassmannians,
which allows a uniform approach to the Schubert calculus on both
varieties. We used this together with our earlier work \cite{BKT2} to
obtain a {\em Giambelli formula} for the Schubert class $\sigma_\la$,
which expresses it as a polynomial in certain special Schubert
classes. This polynomial, which is called an {\em eta polynomial}, and
denoted $H_\la$, is defined using Young's raising operators \cite{Li,
Y}. These Giambelli polynomials naturally live in the stable
cohomology ring of $\OG$, and multiply like the Schubert classes on
$\OG(n+1-k,2n+2)$ when $n$ is sufficiently large. 

The present paper is concerned with the principal specialization
$H_\la(x\,;y)$ of $H_\la$ in the ring of type D Billey-Haiman Schubert
polynomials \cite{BH}. In \cite{BKT3}, we proved that the polynomial
$H_\la(x\,;y)$ may be identified with the Billey-Haiman Schubert
polynomial $\DS_{w_\la}(x\,;y)$ indexed by the corresponding
$k$-Grassmannian element $w_\la$ in the Weyl group $\wt{W}_{n+1}$ for
the root system of type $\text{D}_{n+1}$. Note that
$\DS_{w_\la}(x\,;y)$ is really a formal power series, and that its
equality with $H_\la(x\,;y)$ holds only modulo the relations among the
Schur $P$-functions which enter into the definition of $H_\la(x\,;y)$
(see \S \ref{wsec} for more details). A significant application of
these eta polynomials had appeared earlier in \cite[\S 6]{T2}, where
they were used in splitting formulas for the general type D Schubert
polynomials $\DS_w(x\,;y)$. This result was applied in \cite{T2} to
prove combinatorially explicit Chern class formulas for degeneracy
loci of vector bundles in the sense of Fulton \cite{Fu}. The reader
may consult \cite{T3} for an exposition of this work, which covers
all the classical Lie groups.

The first goal of this article is to combine the above algebraic and
combinatorial theory with the raising operator approach to tableau
formulas developed in \cite{T1}. The constructions here are analogous
to the type C case studied in op.\ cit., but we take the opportunity
to clarify and simplify some of the earlier proofs, when treating the
corresponding results (especially Theorem \ref{mirrorW} and Theorem
\ref{abprop}).

Our main theorem (Theorem \ref{Htableauxthm}) is a formula for the eta
polynomial $H_\la(x\,;y)$, which writes it as a sum of monomials
$(xy)^U$ over certain fillings $U$ of the Young diagram of $\la$
called {\em typed $k'$-bitableaux}. The analysis turns out to be
rather subtle, since a direct synthesis of the results of \cite{BKT3}
and \cite{T1} does not lead to {\em positive} formulas (compare with
\cite[Prop.\ 5.4]{BKT3}). We obtain the key reduction formula
for $H_\la$ (Theorem \ref{reductthmH}) by combining the reduction
formulas for $\wh{H}_\la$ and $\wt{H}_\la$, which are polynomial
constituents of $H_\la$. It is not a priori clear that such an
approach will work; the resulting story is the main innovation of this
paper.

We call an element $w$ of $\wt{W}_{n+1}$ {\em skew} if there exists a
$w'\in\wt{W}_{n+1}$ and a $k$-Grassmannian element $w_\la$ such that
$ww'=w_\la$ and $\ell(w)+\ell(w')=|\la|$.  The skew elements of the
symmetric group are precisely the $321$-avoiding permutations or fully
commutative elements, which were introduced in \cite{BJS} and explored
further in \cite{Ste1, Ste2}.  In the other classical Lie types,
although every fully commutative element is skew, the converse is
false. The skew elements of the the hyperoctahedral group were studied
in \cite{T1}, and we extend this theory here to $\wt{W}_{n+1}$.

Let $\la$ and $\mu$ be typed $k$-strict partitions such that the diagram
of $\mu$ is contained in the diagram of $\la$. Our approach leads
naturally to the definition of certain symmetric functions
$E_{\la/\mu}(x)$, given as a sum of monomials $x^T$ corresponding to
{\em typed $k'$-tableaux} $T$ on the skew shape $\la/\mu$.  We find
that the function $E_{\la/\mu}(x)$, when non-zero, is equal to a type
D Stanley symmetric function $E_w(x)$ indexed by a skew element $w$ in
$\wt{W}_{n+1}$. As in \cite{BJS} and \cite{T1}, there is an explicit
bijection between the standard typed $k'$-tableaux on the skew shape
$\la/\mu$ and the reduced words for a corresponding skew element $w\in
\wt{W}_{n+1}$.

This paper is organized as follows. Section \ref{typeC:general}
reviews the Giambelli and Pieri formulas which hold in the Chern
subring $\Om^{(k)}$ of the stable cohomology ring of $\OG(n+1-k,2n+2)$
as $n\to\infty$. We also establish the {\em mirror identity} in
$\Om^{(k)}$, a key technical tool which provides a bridge between the
Pieri rule and our tableau formulas. Section \ref{red&tab} introduces
the eta polynomials and proves various reduction formulas which are
then combined to obtain our main tableau formula for
$\Eta_\la(x\,;y)$. Finally, in Section \ref{sss} we relate this theory
to the type D Schubert polynomials and Stanley symmetric functions of
\cite{BH, L}, and study the skew elements of $\wt{W}_{n+1}$.

The author is grateful to the referee for a careful reading of the
paper and for suggestions which helped to improve the exposition.

\section{Preliminary results}
\label{typeC:general}

\subsection{Raising operators} An {\em integer sequence} is a sequence
of integers $\al=\{\al_i\}_{i\geq 1}$, only finitely many of which are
non-zero. The {\em length} of $\al$, denoted $\ell(\al)$, is the
largest integer $\ell\geq 0$ such that $\al_\ell\neq 0$. We will
identify an integer sequence of length $\ell$ with the vector
consisting of its first $\ell$ terms.  We set $|\al| = \sum \al_i$ and
let $\#\al$ equal the number of non-zero parts $\al_i$ of $\al$. The
inequality $\al\geq \be$ means that $\al_i \geq \be_i$ for each $i$.
We say that $\al$ is a {\em composition} if $\al_i\geq 0$ for all $i$
and a {\em partition} if $\al_i \geq \al_{i+1}\geq 0$ for all $i$. We
will represent a partition $\la$ by its Young diagram of boxes, which
has $\la_i$ boxes in row $i$ for each $i\geq 1$. The containment
relation between two Young diagrams is denoted by $\mu\subset\la$
instead of $\mu\leq\la$; in this case the set-theoretic difference
$\la\ssm\mu$ is called a skew diagram and is denoted by $\la/\mu$.

Given any integer sequence $\alpha=(\alpha_1,\alpha_2,\ldots)$ and
$i<j$, we define
\[
R_{ij}(\alpha) = (\alpha_1,\ldots,\alpha_i+1,\ldots,\alpha_j-1,
\ldots);
\] 
a raising operator $R$ is any monomial in these $R_{ij}$'s.  Given any
formal power series $\sum_{i \geq 0} c_i t^i$ in the variable $t$ and
an integer sequence $\al = (\al_1,\al_2,\dots,\al_\ell)$, we write
$c_\al = c_{\al_1} c_{\al_2} \cdots c_{\al_\ell}$ and set $R\,c_{\al}
= c_{R\al}$ for any raising operator $R$.  We will always work with
power series with constant term 1, so that $c_0=1$ and $c_i=0$ for
$i<0$.

\subsection{The Giambelli formula} 

Fix an integer $k > 0$. We consider an infinite family
$\om_1,\om_2,\ldots$ of commuting variables, with $\om_i$ of degree $i$ for
all $i$, and set $\om_0=1$, $\om_r=0$ for $r<0$, and $\om_\al= \prod_i
\om_{\al_i}$.  Let $I^{(k)}\subset \Z[\om_1,\om_2,\ldots]$ be the
ideal generated by the relations
\[
\frac{1-R_{12}}{1+R_{12}}\, \om_{(r,r)} = 
\om_r^2 + 2\sum_{i=1}^r(-1)^i \om_{r+i}\om_{r-i}= 0
\ \ \ \text{for} \  r > k.
\]
Define the graded ring $\Om^{(k)}=\Z[\om_1,\om_2,\ldots]/{I^{(k)}}$.

Let $\Delta^{\circ} = \{(i,j) \in \N \times \N \mid 1\leq i<j \}$ and
equip $\Delta^{\circ}$ with 
the partial order $\leq$ defined by $(i',j')\leq (i,j)$
if and only if $i'\leq i$ and $j'\leq j$.  A finite subset $D$ of
$\Delta^{\circ}$ is a {\em valid set of pairs} if it is an order
ideal, i.e., $(i,j)\in D$ implies $(i',j')\in D$ for all $(i',j')\in
\Delta^{\circ}$ with $(i',j') \leq (i,j)$. For any valid set of pairs
$D$, we define the raising operator
\[
R^D = \prod_{i<j}(1-R_{ij})\prod_{i<j\, :\, (i,j)\in D}(1+R_{ij})^{-1}.
\]

A partition $\la$ is {\em $k$-strict} if all its parts greater than
$k$ are distinct. The {\em $k$-length} of $\la$, denoted
$\ell_k(\la)$, is the cardinality of the set $\{i\ |\ \la_i > k\}$. We
say that $\la$ has {\em positive type} if $\la_i = k$ for some index
$i$.  The monomials $\om_\la$ for $\la$ a $k$-strict partition form a 
$\Z$-basis of $\Om^{(k)}$.

Given a $k$-strict partition $\la$, we define a valid set of pairs
$\cC(\la)$ by
\[
\cC(\la) = \{(i,j)\in\Delta^{\circ}\ |\ \la_i+\la_j \geq 2k+j-i \ \,
\text{and} \ \, j \leq \ell(\la)\}
\]
and set $R^\la := R^{\cC(\la)}$.  Furthermore, define $\Om_{\la}\in
\Om^{(k)}$ by the {\em Giambelli formula}
\begin{equation}
\label{giambelliCgen}
\Om_{\la} := R^\la\, \om_{\la}.
\end{equation}
Since
\[
\Om_\la = \om_\la + \sum_{\mu\succ\la} b_{\la\mu} \, \om_\mu
\]
with the sum over $k$-strict partitions $\mu$ which strictly dominate
$\la$, it follows that the $\Om_\la$ as $\la$ runs over $k$-strict partitions
form another $\Z$-basis of $\Om^{(k)}$.  More generally, given any valid
set of pairs $D$ and an integer sequence $\alpha$, we denote $R^D\,
\om_{\al}$ by $\Om^D_\al$.

\begin{example} In the ring $\Om^{(2)}$ we have
\begin{align*}
\Om_{322} 
&= \frac{1-R_{12}}{1+R_{12}}(1-R_{13})(1-R_{23})\, \om_{322} \\
&= (1-2R_{12}+2R_{12}^2 - \cdots)
(1-R_{13})(1-R_{23}) \, \om_{322} \\
&= (1-2R_{12}+2R_{12}^2 - 2 R_{12}^3)(1-R_{13}-R_{23}+R_{13}R_{23})\, 
\om_{322} \\
&= \om_{322}-\om_{421}-2\,\om_7+2\,\om_{61}-\om_{331}+\om_{43} \\
&= \om_3\,\om_2^2 -\om_4\,\om_2\,\om_1 -2\,\om_7 +2\,\om_6\,\om_1 
-\om_3^2\,\om_1+\om_4\,\om_3.
\end{align*}
\end{example}

\subsection{Cohomology of even orthogonal Grassmannians}
\label{cohevog}

Let the vector space $V=\C^{2n+2}$ be equipped with a nondegenerate
symmetric bilinear form. A subspace $\Sigma$ of $V$ is called {\em
isotropic} if the form vanishes when restricted to $\Sigma$. The
dimensions of such isotropic subspaces $\Sigma$ range from $0$ to $n+1$.

Choose $k$ with $0< k \leq n$ and let $\OG=\OG(n+1-k,2n+2)$ denote the
Grassmannian parametrizing isotropic subspaces of $V$ of dimension
$n+1-k$. Let $\cQ$ denote the universal quotient vector bundle of rank
$n+1+k$ over $\OG$, and $\HH^*(\OG,\Z)_1$ denote the subring of the
cohomology ring $\HH^*(\OG,\Z)$ which is generated by the Chern
classes of $\cQ$. There is a ring epimorphism $\psi: \Om^{(k)} \to
\HH^*(\OG,\Z)_1$ which maps the generators $\om_p$ to the Chern classes
$c_p(\cQ)$ for each $p\geq 1$; in particular we have $\psi(\om_p)=0$ if
$p > n+k$.

It follows from \cite[Theorem 1]{BKT3} that for any $k$-strict
partition $\la$, we have 
\[
\psi(2^{-\ell_k(\la)}\,\Om_\la) = [Y_\la]
\]
if the diagram of $\la$ fits inside a rectangle $\cR$ of size
$(n+1-k)\times (n+k)$, and $\psi(\Om_\la)=0$ otherwise. Here $Y_\la$
is a certain Zariski closed subset of pure codimension $|\la|$ in
$\OG$, which is a Schubert variety in $\OG$, if $\la$ does not have
positive type, and a union of two Schubert varieties in $\OG$, if
$\la$ has positive type. The cohomology classes $[Y_\la]$ for $\la$ a
$k$-strict partition contained in $\cR$ form a $\Z$-basis for
$\HH^*(\OG,\Z)_1$.  More details and the proofs of these facts are
provided in \cite{BKT3}.

\subsection{The Pieri rule} 
\label{pierirulegen}
We let $[r,c]$ denote the box in row $r$ and column $c$ of a Young
diagram. We say that the boxes $[r,c]$ and $[r',c']$ are {\em
$k'$-related} if 
$$\left|c-k-\frac{1}{2}\right|+r=\left|c'-k-\frac{1}{2}\right|+r'.$$
In the diagram of Figure \ref{kprshift}, the two grey boxes are
$k'$-related. The definition of $k'$-related boxes was introduced in
\cite[\S 3.2]{BKT1}, and is a type D analogue of the notion of
$k$-related boxes, which is used in the Lie types B and C. The
equivalence relation `$k'$-related' is the same as `$k$-related' when
$k$ is replaced by the half integer $k-1/2$. We will define
`$k$-related' precisely in \S \ref{Adef}.
\begin{figure}
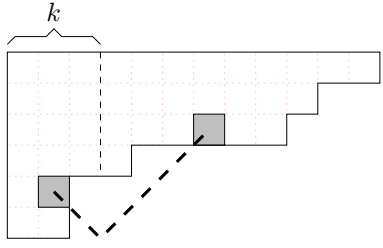

\centering
\pic{0.65}{kprshift} 
\caption{Two $k'$-related boxes in a $k$-strict Young diagram}
\label{kprshift}
\end{figure}

Given two partitions $\la$ and $\mu$ with $\la\subset\mu$, the skew
Young diagram $\mu/\la$ is called a {\em horizontal strip}
(respectively, {\em vertical strip}) if it does not contain two boxes
in the same column (respectively, row). For any two $k$-strict
partitions $\la$ and $\mu$, let $\al_i$ (respectively $\be_i$) denote
the number of boxes of $\la$ (respectively $\mu$) in column $i$, for
$1\leq i\leq k$.  We have a relation $\lambda \to \mu$ if $\mu$ can be
obtained by removing a vertical strip from the first $k$ columns of
$\lambda$ and adding a horizontal strip to the result, so that for
each $i$ with $1\leq i\leq k$,

\medskip
\noin
(1) if $\be_i=\al_i$, then the box $[\al_i,i]$ 
is $k'$-related to at most one box of $\mu \smallsetminus \lambda$; and

\medskip
\noin 
(2) if $\be_i < \al_i$, then the boxes
$[\be_i,i],\ldots,[\al_i,i]$ must each be $k'$-related to
exactly one box of $\mu \smallsetminus \lambda$, and these boxes of
$\mu \smallsetminus \lambda$ must all lie in the same row.

\medskip
If $\lambda \to \mu$, we let $\A'$ be the set of boxes of $\mu\ssm \la$
in columns $k+1$ and higher which are {\em not} mentioned in (1) or
(2). Define the connected components of $\A'$ by agreeing that two
boxes in $\A'$ are connected if they share at least a vertex. Then
define $N(\lambda,\mu)$ to be the number of connected components of
$\A'$, and set
\[
M(\la,\mu) = \ell_k(\la)-\ell_k(\mu)+\begin{cases}
N(\lambda,\mu)+1 & \text{if $\la$ has
positive type and $\mu$ does not}, \\
N(\la,\mu) & \text{otherwise}.
\end{cases}
\]

We deduce from \cite[Theorem 5]{BKT3} and the discussion in \S
\ref{cohevog} that the following Pieri rule holds: For any $k$-strict
partition $\lambda$ and integer $p\geq 0$,
\begin{equation}
\label{finaleq}
 \om_p \cdot \Om_\lambda = \sum_{{\lambda \to \mu} \atop {|\mu|=|\lambda|+p}} 
 2^{M(\lambda,\mu)} \, \Om_\mu \,.
\end{equation}
To compare with \cite[\S 1]{BKT3}, observe that the notion of
$K$-related boxes used in loc.\ cit.\ agrees with the notion of
$k'$-related boxes when the dimension $N$ of the ambient vector space
is even (so we are in Lie type D).

For any $d\geq 1$ define the raising operator $R^{\la}_d$ by
\[
R_d^{\la} = \prod_{1\leq i<j\leq d}(1-R_{ij})\,
\prod_{i<j\, :\, (i,j)\in\cC(\la)} (1+R_{ij})^{-1}.
\]
We compute that
\[
\om_p\cdot \Om_\la = \om_p\cdot R_{\ell}^{\la}\, \om_{\la} =
R_{\ell+1}^\la\cdot \prod_{i=1}^\ell(1-R_{i,\ell+1})^{-1} \, \om_{\la,p}
\]
\[
= R^\la_{\ell+1}\cdot\prod_{i=1}^\ell(1+R_{i,\ell+1} +
R_{i,\ell+1}^2 + \cdots)\,\om_{\la,p} = 
\sum_{\nu\in \cN(\la,p)} \Om^{\cC(\la)}_\nu\,,
\]
where $\cN=\cN(\la,p)$ is the set of all compositions $\nu\geq \la$
such that $|\nu| = |\la|+p$ and $\nu_j =0$ for $j > \ell+1$, where
$\ell$ denotes the length of $\la$. Equation (\ref{finaleq}) is
therefore equivalent to the identity
\begin{equation}
\label{toshow}
\sum_{\nu\in \cN(\la,p)} \Om^{\cC(\la)}_\nu = \sum_{{\lambda \to \mu} \atop
  {|\mu|=|\lambda|+p}} 2^{M(\lambda,\mu)} \, \Om^{\cC(\mu)}_\mu \,,
\end{equation}
which was proved in \cite{BKT3}.

\subsection{The mirror identity}
\label{mirrorsec}
Let $\la$ and $\nu$ be $k$-strict partitions such that $$\nu_1 >
\max(\la_1,\ell(\la)+2k-1)$$ and choose $p,m\geq 0$. Then the Pieri
rule (\ref{finaleq}) implies that the coefficient of $\Om_\nu$ in the
Pieri product $\om_p \cdot \Om_\la$ is equal to the coefficient of
$\Om_{(\nu_1+m,\nu_2,\nu_3,\ldots)}$ in the product $\om_{p+m} \cdot
\Om_\la$.  We apply this to make the following important definition.

\begin{defn}
\label{kprhorstr}
Let $\la$ and $\mu$ be $k$-strict partitions with $\mu\subset\la$,
and choose any $p \gequ \max(\la_1+1,\ell(\la)+2k-1)$. If
$|\la|=|\mu|+r$ and $\la\to(p+r,\mu)$, then we write
$\mu\rsa\la$ and say that $\la/\mu$
is a {\em $k'$-horizontal strip}.  We define $m(\la/\mu):=
M(\la,(p+r,\mu))$; in other words, the numbers $m(\la/\mu)$ are
the exponents that appear in the Pieri product
\begin{equation}
\label{pprod}
\om_p\cdot \Om_\la = \sum_{r,\mu}2^{m(\la/\mu)}\,\Om_{(p+r,\mu)}
\end{equation}
with the sum over integers $r\geq 0$ and $k$-strict partitions
$\mu\subset\la$ with $|\mu| = |\la|-r$. 
\end{defn}

Note that a $k'$-horizontal strip $\la/\mu$ is a pair of partitions
$\la$ and $\mu$ with $\mu\rsa\la$. As such it depends on $\la$ and
$\mu$ and not only on the difference $\la\ssm\mu$.

\begin{lemma}
\label{basiclemD}
Let $\dis \Psi = \prod_{j=2}^{\ell+1} \frac{1-R_{1j}}{1+R_{1j}}$, 
and suppose that we have an equation
\[
\sum_\nu a_\nu \om_\nu = \sum_\nu b_\nu \om_\nu
\]
in $\Om^{(k)}$, where the sums are over all $\nu = (\nu_1, \ldots,
\nu_\ell)$, while $a_\nu$ and $b_\nu$ are integers only finitely many
of which are non-zero.  Then we have
\[
\sum_\nu a_\nu \Psi \,\om_{(p,\nu)} = \sum_\nu b_\nu \Psi \, \om_{(p,\nu)}
\]
in the ring $\Om^{(k)}$, for any integer $p$.
\end{lemma}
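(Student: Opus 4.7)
Setting $c_\nu = a_\nu - b_\nu$, the hypothesis becomes $f := \sum_\nu c_\nu w_\nu \in I^{(k)}$, and the conclusion is that $\sum_\nu c_\nu \Psi\, w_{(p,\nu)}$ lies in $I^{(k)}$. I plan to show that the $\Z$-linear map $\tau_p\colon \Z[w_1,w_2,\ldots] \to \Z[w_1,w_2,\ldots]$ defined by $\tau_p(w_\nu) := \Psi\, w_{(p,\nu)}$ preserves $I^{(k)}$. This map is well defined regardless of the length used to represent $\nu$, since padding with a trailing zero leaves $\Psi\, w_{(p,\nu)}$ unchanged: any raising operator $R_{1,j}^s$ with $s\geq 1$ applied to a zero entry produces a vanishing term.

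Because the variables $w_i$ commute, $I^{(k)}$ is generated as a $\Z$-module by the elements $w_\gamma \cdot g_r$ for $r > k$ and $\gamma$ an arbitrary monomial, where $g_r := \frac{1-R_{12}}{1+R_{12}}\, w_{(r,r)} = w_r^2 + 2\sum_{i=1}^r(-1)^i w_{r+i}w_{r-i}$. By $\Z$-linearity of $\tau_p$ it suffices to verify $\tau_p(w_\gamma g_r) \in I^{(k)}$ on each generator. I will establish the stronger \emph{polynomial} identity
\[
  \tau_p(w_\gamma g_r) \;=\; g_r \cdot \tau_p(w_\gamma) \qquad \text{in } \Z[w_1, w_2, \ldots],
\]
valid for every $\gamma$ and every integer $r \geq 0$; once this is in hand, $g_r \in I^{(k)}$ (for $r > k$) immediately yields the required conclusion. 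To prove it, write $w_\gamma g_r = \frac{1-R_{ab}}{1+R_{ab}} w_{(\gamma,r,r)}$ with $(a,b) = (\ell+1, \ell+2)$ and $\ell = \ell(\gamma)$. Since all raising operators commute as integer shifts on sequences and $\Psi$ involves only $R_{1,j}$'s, we can move $\Psi$ past $R_{a+1,b+1}$ to obtain
\[
  \tau_p(w_\gamma g_r) \;=\; \tfrac{1-R_{a+1,b+1}}{1+R_{a+1,b+1}}\, \Psi\, w_{(p,\gamma,r,r)}.
\]

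The final task is to expand this expression. I will expand $\Psi\, w_{(p,\gamma,r,r)}$ as a sum over tuples $s = (s_2,\ldots,s_{\ell+3})$ of non-negative integers, apply the outer $\frac{1-R_{a+1,b+1}}{1+R_{a+1,b+1}}$ via an additional sum over $t \geq 0$, and collect terms by the pair of values $(A,B)$ at positions $\ell+2, \ell+3$ and by the restriction $s' := (s_2,\ldots,s_{\ell+1})$ acting on $\gamma$. The $s'$-dependence decouples and factors out as $\tau_p(w_\gamma)$. For $(A,B)$ with $A + B = 2r$, the non-negativity constraints force $u := s_{\ell+2} = 0$, $v := s_{\ell+3} = 0$, and $t = A - r \geq 0$; the surviving terms summed over $A = r+i$, $B = r-i$ for $i \geq 0$ assemble into $g_r$. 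For $(A,B)$ with $A + B < 2r$, the contributions attached to $(A, B)$ pair with those attached to $(B, A)$ under the polynomial equality $w_A w_B = w_B w_A$ and cancel. \textbf{The main obstacle} will be checking this last cancellation: it amounts to a combinatorial identity about $t$-sums in which signs and powers of $2$ must be tracked carefully using the $(A,B) \leftrightarrow (B,A)$ symmetry (with an internal sign-reversing involution handling the diagonal $A = B < r$). Explicit verification in small cases (for instance $r = 2$ with $\gamma$ empty) confirms the pattern, and the general argument follows the same structural principle.
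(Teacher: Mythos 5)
Your overall strategy is sound, and it is in substance the strategy behind the paper's proof: the paper's entire argument is a citation to [T, Proposition 2], which proceeds by the same kind of reduction — pass to the difference $f=\sum_\nu(a_\nu-b_\nu)w_\nu\in I^{(k)}$, note that $I^{(k)}$ is spanned over $\Z$ by the elements $w_\gamma g_r$ with $r>k$, and show the operation $w_\nu\mapsto\Psi\,w_{(p,\nu)}$ sends each such generator back into $I^{(k)}$. Your key identity $\tau_p(w_\gamma g_r)=g_r\cdot\tau_p(w_\gamma)$ is in fact true as a polynomial identity. Two smaller corrections before the main one: well-definedness of $\tau_p$ requires invariance of $\Psi\,w_{(p,\nu)}$ not only under zero-padding but also under permutation of the parts of $\nu$ (you need this to evaluate $\tau_p$ on the unsorted monomials $w_{(\gamma,r+i,r-i)}$); this holds by re-indexing the expansion of $\Psi$, but must be said. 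Also the $s'$-dependence does \emph{not} factor out as $\tau_p(w_\gamma)$: for a fixed pair $(A,B)=(r-u+t,\,r-v-t)$ the first index is $p+|s'|+u+v$ with $u+v=2r-A-B$, so what factors out is $\tau_{p+2r-A-B}(w_\gamma)$. This is harmless only because it depends on $(A,B)$ through $A+B$ alone, which is exactly why cancellation within each fixed level $A+B<2r$ would finish the proof.

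The genuine gap is that this cancellation — the entire computational content of the lemma — is left as a pattern confirmed in small cases, and the mechanism you propose cannot work as described. Writing $\epsilon_0=1$, $\epsilon_s=2(-1)^s$ for $s\geq1$ and $\epsilon_s=0$ for $s<0$, the coefficient attached to the ordered pair $(A,B)$ is $c_{A,B}=\sum_{t\geq0}\epsilon_{r+t-A}\,\epsilon_{r-t-B}\,\epsilon_t$. Already for $r=1$ one finds $c_{1,0}=-2+4=2$ while $c_{0,1}=-2$: the two sides have different numbers of terms and different magnitudes, so no sign-reversing involution matching terms of $(A,B)$ against terms of $(B,A)$ exists; only the \emph{sums} cancel, and proving $c_{A,B}+c_{B,A}=0$ (and $c_{A,A}=0$) for $A+B<2r$ is a genuine identity requiring resummation. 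The clean route is to prove, for each $m\geq1$,
\[
\sum_{u+v=m}\ \sum_{t\geq0}\epsilon_u\,\epsilon_v\,\epsilon_t\,w_{r-u+t}\,w_{r-v-t}=0,
\]
for instance by encoding $R_{ij}$ as multiplication by $z_j/z_i$, so that $\frac{1-R_{ij}}{1+R_{ij}}$ becomes $\frac{z_i-z_j}{z_i+z_j}$ and $w_\al$ is the coefficient of $z^\al$ in $\prod_i\bigl(\sum_{j\geq0}w_jz_i^j\bigr)$. The terms with $u+v\geq1$ then come from
\[
\frac{(z_1-z_b)(z_1-z_c)}{(z_1+z_b)(z_1+z_c)}-1=\frac{-2z_1(z_b+z_c)}{(z_1+z_b)(z_1+z_c)},
\]
and multiplying by the factor $\frac{z_b-z_c}{z_b+z_c}$ coming from $R_{bc}$ yields $\frac{-2z_1(z_b-z_c)}{(z_1+z_b)(z_1+z_c)}$, which is antisymmetric under $z_b\leftrightarrow z_c$; hence its coefficient of $z_b^rz_c^r$ against the symmetric series in $z_b,z_c$ vanishes identically, level by level in $z_1$. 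With this identity supplied (and the well-definedness point above), your argument is complete and agrees with the proof the paper points to.
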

\begin{proof}       
The proof is the same as \cite[Proposition 2]{T1}.
\end{proof}

Let $\la$ be any $k$-strict partition of length $\ell$. Consider 
the following version of (\ref{toshow}):
\begin{equation}
\label{showed}
\sum_{\al\geq 0} \Om^{\cC(\la)}_{\la+\al} = 
\sum_{\lambda \to \mu} 2^{M(\lambda,\mu)} \, \Om^{\cC(\mu)}_\mu 
\end{equation}
where the first sum is over all compositions $\al$ of length at most
$\ell+1$, and the second over $k$-strict partitions $\mu$ with
$\la\to\mu$. The next result is called the {\em mirror identity} of 
(\ref{showed}), and is an analogue of  \cite[Theorem 2]{T1}; the proof 
we give below simplifies the one found in loc.\ cit.\ 

\begin{thm}
\label{mirrorW}
For $\la$ any $k$-strict partition we have
\begin{equation}
\label{minuseq}
\sum_{\al\geq 0} 2^{\#\al} \, \Om^{\cC(\la)}_{\la-\al} 
= \sum_{\mu\rsa\la} 2^{m(\la/\mu)} \, \Om_\mu,
\end{equation}
where the first sum is over all compositions $\al$.
\end{thm}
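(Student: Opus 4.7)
The plan is to derive (\ref{minuseq}) from the Pieri rule (\ref{pprod}) by first proving a \emph{lifted} identity that multiplies both sides by an auxiliary variable $w_{p+s}$, and then extracting the graded pieces. Throughout write $\ell = \ell(\la)$.

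First I would establish a key factorization of the Giambelli raising operator: for $p$ sufficiently large (e.g.\ $p \geq \max(\la_1+1,\ell+2k-1)$) every pair $(1,j)$ with $2 \leq j \leq \ell+1$ lies in $\cC((p,\la))$, so that $R^{(p,\la)}$ factors as $\Psi \cdot R^{\cC(\la)}$, where $\Psi$ is the operator of Lemma \ref{basiclemD} and $R^{\cC(\la)}$ is reindexed to act on positions $2,\ldots,\ell+1$. These two factors commute since they touch disjoint sets of indices. Introducing the formal inverse $\Psi^{-1} = \prod_{j=2}^{\ell+1}(1+R_{1j})(1-R_{1j})^{-1}$ and expanding as $\prod_j(1+2R_{1j}+2R_{1j}^2+\cdots)$, one computes the finite, uniformly positive expansion
\[
\Psi^{-1}w_{(p,\la)} = \sum_{\al \geq 0} 2^{\#\al}\, w_{(p+|\al|,\,\la-\al)}.
\]
Applying the shifted $R^{\cC(\la)}$ and using $R^{\cC(\la)}w_{(q,\nu)} = w_q W^{\cC(\la)}_\nu$ then yields $\Psi^{-1}(w_p W_\la) = \sum_{\al \geq 0} 2^{\#\al}\, w_{p+|\al|}\, W^{\cC(\la)}_{\la-\al}$. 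The same factorization applied to $(p+r,\mu)$ gives $W_{(p+r,\mu)} = \Psi(w_{p+r}W_\mu)$, so $\Psi^{-1}W_{(p+r,\mu)} = w_{p+r}W_\mu$.

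Next I would apply $\Psi^{-1}$ term-by-term to (\ref{pprod}), namely $w_p W_\la = \sum_{\mu \rsa \la} 2^{m(\la/\mu)} W_{(p+r,\mu)}$; combining the two identities just derived produces the \emph{lifted mirror identity}
\[
\sum_{\al \geq 0} 2^{\#\al}\, w_{p+|\al|}\, W^{\cC(\la)}_{\la-\al} \;=\; \sum_{\mu \rsa \la} 2^{m(\la/\mu)}\, w_{p+r}\, W_\mu
\]
in $C^{(k)}$ (where $r = |\la|-|\mu|$), valid for all sufficiently large $p$.

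The final step is to extract (\ref{minuseq}) by equating the coefficients of $w_{p+s}$ on both sides for each $s \geq 0$: on the left this yields $\sum_{|\al|=s,\, \al \geq 0} 2^{\#\al}\, W^{\cC(\la)}_{\la-\al}$ and on the right $\sum_{\mu \rsa \la,\, |\la|-|\mu|=s} 2^{m(\la/\mu)}\,W_\mu$, and summing over $s$ recovers (\ref{minuseq}). I expect the hardest part to be the rigorous justification of this coefficient extraction inside the quotient ring $C^{(k)}$, where the $w_j$ are not algebraically independent. The argument rests on the observation that for $p$ much larger than $|\la|$, the Pieri products $w_{p+s}\cdot W_\sigma$ (over varying $s \geq 0$ and $k$-strict $\sigma$ with $|\sigma|=|\la|-s$) have pairwise distinct leading Giambelli classes $W_{(p+s,\sigma)}$, so their images span linearly independent subspaces of $C^{(k)}_{p+|\la|}$. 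This is the same separation mechanism that underlies the proof of the analogous mirror identity for theta polynomials in \cite[\S 4]{T}.
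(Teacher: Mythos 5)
The fatal step is ``apply $\Psi^{-1}$ term-by-term to (\ref{pprod}).'' Equation (\ref{pprod}) is an identity in the quotient ring $C^{(k)}=\Z[w_1,w_2,\ldots]/I^{(k)}$, not an identity between polynomial representatives: its two sides agree only modulo the relations (\ref{kpresrels}). But raising operators do not act on elements of $C^{(k)}$; they act on formal $\Z$-linear combinations of monomials $w_\gamma$ indexed by integer sequences (for instance $R_{12}$ acts differently on $w_{(2,3)}$ and on $w_{(3,2)}$, which are the same ring element). So ``$\Psi^{-1}$ of both sides of (\ref{pprod})'' means: apply $\Psi^{-1}$ to chosen representatives of each side, and then assert that the two outputs are again equal in $C^{(k)}$. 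That assertion requires proving that the operation $w_\nu \mapsto \Psi^{-1}w_{(p,\nu)}=\sum_{\al\geq 0}2^{\#\al}\,w_{(p+|\al|,\,\nu-\al)}$ carries every relation $\sum_\nu a_\nu w_\nu = 0$ holding in $C^{(k)}$ to another such relation --- i.e.\ a $\Psi^{-1}$-analogue of Lemma \ref{basiclemD}. The lemma as stated (and as proved in \cite{T}) concerns only $\Psi$, and neither statement formally implies the other. Without this descent result your ``lifted mirror identity'' is unproven, and note that, given the linear independence of the products $w_d\,W_\nu$, that lifted identity is equivalent in strength to (\ref{minuseq}) itself: essentially the entire content of the theorem is hidden inside this single unjustified move.

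This is exactly the trap the paper's proof is structured to avoid. The paper first proves the representative-level (polynomial) identity $w_p\,W_\la = \sum_{\al\geq 0}2^{\#\al}\,W^{\cC(p,\la)}_{(p+|\al|,\la-\al)}$, combines it with the Pieri rule to get the $C^{(k)}$-identity (\ref{pfeq0}), and then runs an induction on $r$: leading-term extraction via the linear independence of the $w_d\,W_\nu$ passes from the lifted identity (\ref{pfeq2}) to the unlifted degree-$r$ identity (\ref{pfeq3}), and Lemma \ref{basiclemD} --- the $\Psi$-direction, which \emph{is} available --- re-lifts (\ref{pfeq3}) to (\ref{pfeq1}) so that the induction can proceed to the next degree. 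In other words, the paper only ever ``multiplies by $\Psi$'' inside $C^{(k)}$, never ``divides''; the division is simulated degree by degree through the alternation of extraction and re-lifting. You have also located the difficulty in the wrong place: the final coefficient extraction you worry about is sound (it is the same separation mechanism the paper uses, since for large $p$ the pairs $(p+s,\nu)$ are $k$-strict and the products $w_{p+s}W_\nu$ are linearly independent), whereas the gap sits one step earlier. Your argument would become a genuine streamlining of the proof if you established the $\Psi^{-1}$-analogue of Lemma \ref{basiclemD}, but that claim does not follow from \cite[Proposition 2]{T}, and proving it is not obviously easier than the induction it is meant to replace.
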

\begin{proof}       
Choose $p \geq |\la|+2k$ and let $\ell=\ell(\la)$. Expanding the Giambelli
formula with respect to the first row gives
\begin{gather*}
\om_p\cdot \Om_\la = 
 R^{\cC(p,\la)}
\,\prod_{j=2}^{\ell+1} \, \frac{1+R_{1j}}{1-R_{1j}} \, \om_{(p,\la)} =
 R^{\cC(p,\la)}
\,\prod_{j=2}^{\ell+1} (1+2 R_{1j}+2 R_{1j}^2+\cdots)\,\om_{(p,\la)} \\
= \sum_{\al\geq 0} 2^{\#\al} \, \Om^{\cC(p,\la)}_{(p+|\al|,\la-\al)}.
\end{gather*}
Comparing this with (\ref{pprod}), we deduce that 
\[
\sum_{\al\geq 0} 2^{\#\al} \, \Om^{\cC(p,\la)}_{(p+|\al|,\la-\al)} 
= \sum_{\mu\rsa\la} 2^{m(\la/\mu)} \, \Om_{(p+|\la|-|\mu|,\mu)}.
\]

We claim that for every integer $r\geq 0$, 
\begin{equation}
\label{pfeq1}
\sum_{|\al|=r} 2^{\#\al} \, \Om^{\cC(p,\la)}_{(p+r,\la-\al)} 
= \sum_{{\mu\rsa\la}\atop{|\mu|=|\la|-r}} 
2^{m(\la/\mu)} \, \Om_{(p+r,\mu)}.
\end{equation}
The proof is by induction on $r$. The base case $r = 0$ is clearly
true. For the induction step, suppose that we have for some $r > 0$
that
\begin{equation}
\label{pfeq2}
\sum_{s\geq r} \sum_{|\al|=s} 2^{\#\al} \,
\Om^{\cC(p,\la)}_{(p+s,\la-\al)} = \sum_{s\geq r}
\sum_{{\mu\rsa\la}\atop{|\mu|=|\la|-s}} 2^{m(\la/\mu)} \, \Om_{(p+s,\mu)}.
\end{equation}
Expanding the Giambelli formula with respect to the first component,
we obtain $\om_{p+s} \, \Om^{\cC(\la)}_{\la-\al}$ as the leading term of
$\Om^{\cC(p,\la)}_{(p+s,\la-\al)}$, while $\om_{p+s} \,\Om_\mu$ is the
leading term of $\Om_{(p+s,\mu)}$. Since the set of all products $\om_d\,
\Om_\nu$ for which $(d,\nu)$ is a $k$-strict partition is linearly
independent in $\Om^{(k)}$, we deduce from (\ref{pfeq2}) that
\begin{equation}
\label{pfeq3}
\sum_{|\al|=r} 2^{\#\al} \, \Om^{\cC(\la)}_{\la-\al} = 
\sum_{{\mu\rsa\la}\atop{|\mu|=|\la|-r}} 
2^{m(\la/\mu)} \, \Om_\mu.
\end{equation}
By applying Lemma \ref{basiclemD} to (\ref{pfeq3}), we see that
(\ref{pfeq1}) is true, and this completes the induction. This also
finishes the proof of Theorem \ref{mirrorW}, since the argument shows
that (\ref{pfeq3}) holds for every integer $r\geq 0$.
\end{proof}

\subsection{The set $\A$ and the integer $N(\A)$}
\label{Adef}
We say that boxes $[r,c]$ and $[r',c']$ are {\em $(k-1)$-related} if
$$|c-k|+r = |c'-k|+r'.$$ For example, the two grey boxes in 
the diagram of Figure \ref{kprrelated} are $(k-1)$-related. 
\begin{figure}
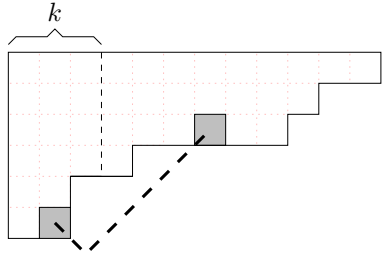

\centering
\pic{0.65}{kdprshift} 
\caption{Two $(k-1)$-related boxes in a Young diagram}
\label{kprrelated}
\end{figure}
We call box $[r,c]$ a {\em left box} if $c \leq k$ and a {\em
right box} if $c>k$.

If $\mu\subset\la$ are two $k$-strict
partitions such that $\la/\mu$ is a $k'$-horizontal strip,
we define $\la_0=\mu_0=\infty$ and agree that the
diagrams of $\la$ and $\mu$ include all boxes $[0,c]$ in row zero.
We let $R$ (respectively $\A$) denote the set of right boxes of $\mu$
(including boxes in row zero) which are bottom boxes of $\la$ in their
column and are (respectively are not) $(k-1)$-related to a left box of
$\la/\mu$. Let $N(\A)$ denote the number of connected components of
$\A$. Moreover, define
\[
\wh{n}(\la/\mu)= \ell_k(\mu) - \ell_k(\la) + m(\la/\mu).
\]

\begin{lemma}
A pair $\mu\subset\la$ of $k$-strict partitions forms a $k'$-horizontal
strip $\la/\mu$ if and only if (i) $\la/\mu$ is contained in the rim
of $\la$, and the right boxes of $\la/\mu$ form a horizontal strip;
(ii) no two boxes in $R$ are $(k-1)$-related; and (iii) if two boxes of
$\la/\mu$ lie in the same column, then they are $(k-1)$-related to
exactly two boxes of $R$, which both lie in the same row. We have 
\begin{equation}
\label{whn}
\wh{n}(\la/\mu)=  
\begin{cases} 
N(\A) & \text{if $\la$ has
positive type and $\mu$ does not}, \\
N(\A)-1 & \text{otherwise}.
\end{cases}
\end{equation}
\end{lemma}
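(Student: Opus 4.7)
The plan is to unwind Definition \ref{kprhorstr} by applying the Pieri rule of \S\ref{pierirulegen} to $\la\to\nu$ with $\nu=(p+r,\mu)$, $r=|\la|-|\mu|$, and $p$ chosen large. First I would analyze $\la\to\nu$ in this setting: since $p\gg\la_1$, the horizontal strip added to $\la\ssm V$ is the union of the entire new top row of $\nu$ with the right boxes of $\mu$ that sit outside $\la$ when $\mu$ is viewed in rows $2,3,\dots$ of $\nu$, and the combined horizontal-/vertical-strip requirement then translates into condition (i) that $\la/\mu$ lies in the rim of $\la$ with its right boxes forming a horizontal strip. Next I would translate the $k'$-relation conditions (1), (2) of the Pieri rule into (ii), (iii), using the row-shift $[r,c]\leftrightarrow[r-1,c]$ which identifies row $r\geq 2$ of $\nu$ with row $r-1$ of $\mu$ and identifies row $1$ of $\nu$ with the phantom row $0$ of $\mu$. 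A direct calculation shows that for $r\geq 1$ the $k'$-relation $i+c=2k+1+\al_i-r$ between $[\al_i,i]$ and a box $[r,c]$ of $\nu\ssm\la$ coincides with the $k''$-relation $|c-k|+(r-1)=|i-k|+\al_i$ between $[\al_i,i]$ and $[r-1,c]$ in $\mu$. Under this dictionary, the set of boxes ``mentioned'' in Pieri (1), (2) corresponds exactly to the set $R$, and the ``at most one'' / ``same row, exactly one'' requirements become (ii) and (iii).

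For the formula (\ref{whn}), I would use $\ell_k(\nu)=\ell_k(\mu)+1$ (since $\nu_1>k$) and that $\nu$ has positive type iff $\mu$ does (since $\nu_1\ne k$), and substitute into $m(\la/\mu)=M(\la,\nu)=\ell_k(\la)-\ell_k(\nu)+N(\la,\nu)+\epsilon$, where $\epsilon=1$ iff $\la$ has positive type and $\mu$ does not. This yields
\[
\wh n(\la/\mu)=\ell_k(\mu)-\ell_k(\la)+m(\la/\mu)=N(\la,\nu)-1+\epsilon,
\]
which reduces (\ref{whn}) to the identity $N(\la,\nu)=N(\A)$. This identity follows from the same row-shift bijection: the Pieri set for $\la\to\nu$ (boxes in right columns of $\nu\ssm\la$ not $k'$-related to any mentioned box) is carried onto $\A$, with the long row-$1$ run of $\nu\ssm\la$ in columns $>\la_1$ corresponding precisely to the row-$0$ portion of $\A$ (truncated at column $p+r$ under the shift). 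Since vertex-adjacency is preserved by a uniform row-shift and the single row-$0$ run cannot be split or merged by the truncation, the component counts agree.

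The main obstacle is the careful case-by-case bookkeeping in the translation of conditions (i)-(iii): one must handle the mild ambiguity in the choice of vertical strip $V$ witnessing $\la\to\nu$ (which can include or exclude the topmost box in each left column of $\la/\mu$ without changing $\nu$), distinguish the Pieri cases $\al_i=\be_i$ and $\al_i>\be_i$, and verify that the horizontal-strip condition on the added strip combines with the vertical-strip condition on $V$ to give precisely (i). Matching the $k'$- and $k''$-related bookkeeping between mentions in Pieri (1), (2) and membership in $R$, especially at the interface between the phantom row $0$ of $\mu$ and the genuine row-$1$ run of $\nu\ssm\la$, is the delicate combinatorial core of the argument.
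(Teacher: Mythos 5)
Your proposal is correct and follows essentially the same route as the paper, whose entire proof consists of the observation that $\mu\rsa\la$ iff $\la\to(p+r,\mu)$ and that a box of $(p+r,\mu)\ssm\la$ corresponds, under the row shift you describe, to a box of $\mu$ which is a bottom box of $\la$ in its column, the rest being declared a ``straightforward translation of the definitions.'' Your write-up simply makes explicit the dictionary ($k'$-related $\leftrightarrow$ $k''$-related, mentioned boxes $\leftrightarrow$ $R$, unmentioned $\leftrightarrow$ $\A$) and the exponent bookkeeping $\wh{n}(\la/\mu)=N(\la,\nu)-1+\epsilon$ that the paper leaves to the reader.
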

\begin{proof}
We have $\mu\rsa\la$ if and only if $\la\to(p+r,\mu)$ for any
$p\geq|\la|+2k$, where $r=|\la-\mu|$. Observe that a box of
$(p+r,\mu)\ssm\la$ corresponds to a box of $\mu$ which is a bottom box
of $\la$ in its column. The rest of the proof is a straightforward
translation of the definitions in \S \ref{pierirulegen}.
\end{proof}

\section{Reduction formulas and tableaux}
\label{red&tab}

\subsection{Schur and theta polynomials}
\label{theta1}
Let $x=(x_1,x_2,\ldots)$ and set $y=(y_1,\ldots,y_k)$ for a fixed
integer $k\geq 1$. Consider the generating series
\[
\prod_{i=1}^{\infty}\frac{1+x_it}{1-x_it} 
= \sum_{r=0}^{\infty}q_r(x)t^r
\ \ \ \text{and} \ \ \ 
\prod_{j=1}^k(1+y_jt) = \sum_{r=0}^{\infty}
e_r(y)t^r
\]
for the Schur $Q$-functions $q_r(x)$ and elementary symmetric
polynomials $e_r(y)$. If $\la$ is any partition, let $\la'$ denote the 
partition conjugate to $\la$, whose diagram is the transpose of the 
diagram of $\la$.  Then the Schur $S$-polynomial $s_{\la'}(y)$ may 
be defined by the equation
\[
s_{\la'} = \prod_{i<j}(1-R_{ij})\, e_\la.
\]
Furthermore, given any strict partition $\la$ of length
$\ell(\la)$, the Schur $Q$-function $Q_\la(x)$ is defined by the
raising operator expression
\[
Q_\la = \prod_{i<j}\frac{1-R_{ij}}{1+R_{ij}}\, q_\la
\]
and the $P$-function $P_\la(x)$ is given by $P_\la=
2^{-\ell(\la)}\,Q_\la$.  In particular $P_0=1$ and for each integer
$r\geq 1$, we have $P_r = q_r/2$.

Following \cite{BKT2}, for each integer $r$, define
$\ti_r=\ti_r(x\,;y)$ by
\[
\ti_r = \sum_{i\geq 0} q_{r-i}(x) e_i(y).
\]
We let $\Gamma^{(k)}= \Z[\ti_1,\ti_2,\ti_3,\ldots]$ be the ring of
theta polynomials.  There is a ring isomorphism
$\Om^{(k)}\to\Gamma^{(k)}$ sending $\om_r$ to $\ti_r$ for all $r$.  Let
$\la$ be a $k$-strict partition, and consider the raising operator
\[
\wt{R}^{\la} = \prod_{i<j}(1-R_{ij})\prod_{i<j\, :\, \la_i+\la_j > 2k+j-i}
(1+R_{ij})^{-1}.
\]
The theta polynomial $\Ti_\la(x\,;y)$ is defined by the equation
$\Ti_\la = \wt{R}^\la\, \ti_\la$. The polynomials $\Theta_\la$ for all
$k$-strict partitions $\la$ form a $\Z$-basis of $\Gamma^{(k)}$.

Set
\[
\eta_r(x\,;y) = \begin{cases}
e_r(y) + 2\sum_{i=0}^{r-1} P_{r-i}(x)e_i(y) & \text{if $r<k$}, \\
 \sum_{i=0}^r P_{r-i}(x) e_i(y) & \text{if $r\geq k$}
\end{cases}
\]
and $$\eta'_k(x\,;y) =  \sum_{i=0}^{k-1} P_{k-i}(x) e_i(y).$$ 
For any $r\geq 0$, we have
\[
\ti_r=
\begin{cases}
\eta_r &\text{if $r< k$},\\
\eta_k+\eta_k' &\text{if $r=k$},\\
2\eta_r &\text{if $r> k$},
\end{cases}
\]
while $\eta_k-\eta'_k=e_k(y)$. Following \cite{BKT3}, we define the
ring of eta polynomials
\[
B^{(k)} = \Z[\eta_1,\ldots, \eta_{k-1},\eta_k,\eta'_k,\eta_{k+1}\ldots].
\]

\subsection{The reduction formulas for $\wh{\Eta}_{\la}$ and $\wt{\Eta}_{\la}$}
For any $k$-strict partition $\la$, define $\wh{\Ti}_\la(x\,;y)$ and
$\wh{\Eta}_\la(x\,;y)$ by the equations
\[
\wh{\Ti}_\la = R^\la\, \ti_\la \ \ \mathrm{and} \ \ 
\wh{\Eta}_\la = 2^{-\ell_k(\la)}\,\wh{\Ti}_\la.
\]
and let $\tilde{x}=(x_2,x_3,\ldots)$.

\begin{prop}
\label{reductthm}
For any $k$-strict partition $\la$, we have the reduction formula 
\begin{equation}
\label{reductTH}
\wh{\Eta}_\la(x\,;y) = \sum_{p=0}^\infty \,x_1^p\,
\sum_{{\mu\rsa\la}\atop{|\mu|=|\la|-p}}
2^{\wh{n}(\la/\mu)} \, \wh{\Eta}_\mu(\tilde{x}\,;y).
\end{equation}
\end{prop}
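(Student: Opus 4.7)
The strategy is to reduce the identity to the per-degree mirror identity (\ref{pfeq3}), transported from $C^{(k)}$ to $\Gamma^{(k)}$ via the ring isomorphism $w_r\mapsto\ti_r$.

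First, I would isolate the variable $x_1$ in the generating series for $\ti_r(x;y)$. Writing $\tilde{x}=(x_2,x_3,\ldots)$, the factorization
\[
\prod_{i\geq 1}\frac{1+x_it}{1-x_it}\,\prod_{j=1}^k(1+y_jt)=\frac{1+x_1t}{1-x_1t}\sum_{r\geq 0}\ti_r(\tilde{x};y)\,t^r,
\]
together with $\frac{1+x_1t}{1-x_1t}=1+2\sum_{a\geq 1}(x_1t)^a$, gives
\[
\ti_r(x;y)=\sum_{a\geq 0}2^{[a>0]}\,x_1^a\,\ti_{r-a}(\tilde{x};y).
\]
Multiplying these identities across $i=1,\ldots,\ell(\la)$ yields
\[
\ti_\la(x;y)=\sum_{\al\geq 0}2^{\#\al}\,x_1^{|\al|}\,\ti_{\la-\al}(\tilde{x};y),
\]
summed over compositions $\al$ of length at most $\ell(\la)$.

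Next, I would apply $R^\la=R^{\cC(\la)}$ to both sides. Since the raising operator acts only on the integer subscripts of $\ti$-symbols while the factors $x_1^{|\al|}$ lie in the coefficient ring, it passes freely through the summation and produces
\[
\wh{\Ti}_\la(x;y)=\sum_{p\geq 0}x_1^p\sum_{|\al|=p}2^{\#\al}\,\wh{\Ti}^{\cC(\la)}_{\la-\al}(\tilde{x};y),
\]
where $\wh{\Ti}^D_\al:=R^D\,\ti_\al$ corresponds to $W^D_\al$ under the isomorphism $C^{(k)}\to\Gamma^{(k)}$.

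The final step is to invoke the mirror identity. Equation (\ref{pfeq3}) holds per degree and, after transport to $\Gamma^{(k)}$ and evaluation at $(\tilde{x},y)$, reads
\[
\sum_{|\al|=p}2^{\#\al}\,\wh{\Ti}^{\cC(\la)}_{\la-\al}(\tilde{x};y)=\sum_{\mu\rsa\la,\ |\mu|=|\la|-p}2^{m(\la/\mu)}\,\wh{\Ti}_\mu(\tilde{x};y).
\]
Substituting into the previous display gives the $\wh{\Ti}$-version of the reduction formula. Dividing both sides by $2^{\ell_k(\la)}$ and using the relation $\wh{n}(\la/\mu)=\ell_k(\mu)-\ell_k(\la)+m(\la/\mu)$ from \S\ref{Adef} converts each $2^{m(\la/\mu)}\wh{\Ti}_\mu/2^{\ell_k(\la)}$ into $2^{\wh{n}(\la/\mu)}\wh{\Eta}_\mu$, delivering (\ref{reductTH}).

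The step requiring the most care is verifying that $R^\la$ commutes with the $x_1$-expansion, i.e.\ that the formal operator on index sequences passes freely past the scalar coefficients $x_1^{|\al|}$. Once that is confirmed, the argument is a clean two-step reorganization: the $x_1$-expansion produces a sum over compositions $\al$, and the mirror identity repackages the $\al$-sum at each fixed degree $p$ into the sum over $k'$-horizontal strips $\mu\rsa\la$ with $|\la|-|\mu|=p$.
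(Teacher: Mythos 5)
Your proposal is correct and follows essentially the same route as the paper's own proof: the same $x_1$-expansion of the generating series for $\ti_r$, the same commutation of $R^\la$ past the $x_1$-coefficients, and the same appeal to the (per-degree) mirror identity, finished by dividing by $2^{\ell_k(\la)}$ and using $\wh{n}(\la/\mu)=\ell_k(\mu)-\ell_k(\la)+m(\la/\mu)$. The only cosmetic difference is that you cite the graded identity (\ref{pfeq3}) directly rather than the summed form (\ref{minuseq}), and you make explicit the transport from $C^{(k)}$ to theta polynomials in $(\tilde{x}\,;y)$, both of which are implicit in the paper.
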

\begin{proof}
We compute that
\[
\sum_{r=0}^{\infty}\ti_r(x\,;y)t^r = \frac{1+x_1t}{1-x_1t}\,
\prod_{i=2}^{\infty}\frac{1+x_it}{1-x_it} \,\prod_{j=1}^k
(1+y_jt) = \sum_{i=0}^{\infty}x_1^i\,2^{\#i}\,\sum_{s=0}^{\infty}
\ti_s(\tilde{x}\,;y)t^{s+i}
\]
and therefore, for any integer sequence $\mu$, we have
\begin{equation}
\label{stepto}
\ti_\mu(x\,;y) = \sum_{\al\geq 0}
x_1^{|\al|}\,2^{\#\al}\,\ti_{\mu-\al}(\tilde{x}\,;y)
\end{equation}
summed over all compositions $\al$. If $R$ denotes any raising operator,
then
\begin{equation*}
R\, \ti_\mu(x\,;y) = \ti_{R\mu}(x\,;y) =
\sum_{\al\geq 0} x_1^{|\al|}\,2^{\#\al}\,\ti_{R\mu-\al}(\tilde{x}\,;y) 
= \sum_{\al\geq 0} x_1^{|\al|}\,2^{\#\al}\,R\,\ti_{\mu-\al}(\tilde{x}\,;y).
\end{equation*}
Taking $\mu$ equal to a $k$-strict partition $\la$ and applying the
raising operator $R^\la$ to both sides of (\ref{stepto}), we therefore
obtain
\[
\wh{\Ti}_\la(x\,;y) = \sum_{\al\geq 0} x_1^{|\al|}\,2^{\#\al}\,
\wh{\Ti}^{\cC(\la)}_{\la-\al}(\tilde{x}\,;y) = 
\sum_{p=0}^\infty \,x_1^p \,\sum_{|\al|=p}2^{\#\al}\,
\wh{\Ti}^{\cC(\la)}_{\la-\al}(\tilde{x}\,;y),
\]
where $\wh{\Ti}^{\cC(\la)}_{\la-\al} = R^\la\,\ti_{\la-\al}$ by
definition. We now use the mirror identity (\ref{minuseq}) and 
the equation $\wh{\Ti}_\la = 2^{\ell_k(\la)}\,\wh{\Eta}_\la$
to complete the proof.
\end{proof}

For any $k$-strict partition $\la$ of positive type, we define
$\wt{\Eta}_\la(x\,;y)$ by the equation
\begin{equation}
\label{Hwt}
\wt{\Eta}_\la = 2^{-\ell_k(\la)}\,e_k(y)\,\Ti_{\la - k}
\end{equation}
where $\la-k$ means $\la$ with one part equal to $k$ removed. If $\la$
does not have a part equal to $k$, we agree that $\wt{\Eta}_\la = 0$.
The following analogue of Proposition \ref{reductthm} is valid for the
polynomials $\wt{\Eta}(x\,;y)$.

\begin{prop}
\label{reductthmwTH}
For any $k$-strict partition $\la$ of positive type, we have the 
reduction formula 
\begin{equation}
\label{reductwTH}
\wt{\Eta}_\la(x\,;y) = \sum_{p=0}^\infty \,x_1^p\,
\sum_{{\mu\rsa\la}\atop{|\mu|=|\la|-p}}
2^{\wh{n}(\la/\mu)} \, \wt{\Eta}_\mu(\tilde{x}\,;y)
\end{equation}
where the sum is over $k$-strict partitions $\mu\subset \la$ 
of positive type such that $\la/\mu$ is a $k'$-horizontal 
strip.
\end{prop}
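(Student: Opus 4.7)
The plan is to mimic the proof of Proposition \ref{reductthm}, with the polynomial $\wt{\Eta}_\la = 2^{-\ell_k(\la)}\,e_k(y)\,\Ti_{\la-k}$ in place of $\wh{\Eta}_\la$. Since $e_k(y)$ does not involve $x$ and $\ell_k(\la-k)=\ell_k(\la)$, we may cancel the common factors $2^{-\ell_k(\la)}\,e_k(y)$ from (\ref{reductwTH}). Using $\wh{n}(\la/\mu) = m(\la/\mu) + \ell_k(\mu) - \ell_k(\la)$, the claim reduces to
\[
\Ti_{\la-k}(x\,;y) = \sum_{p\geq 0} x_1^p \sum_{\substack{\mu\rsa\la \\ |\mu|=|\la|-p,\ \mu \text{ of positive type}}} 2^{m(\la/\mu)}\,\Ti_{\mu-k}(\tilde{x}\,;y).
\]

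The first two steps follow the proof of Proposition \ref{reductthm} with only notational changes. First, apply (\ref{stepto}) to $\ti_{\la-k}(x\,;y)$ and then apply the raising operator $\wt{R}^{\la-k}$ to both sides to obtain
\[
\Ti_{\la-k}(x\,;y) = \sum_{p\geq 0} x_1^p \sum_{|\al|=p} 2^{\#\al}\,\wt{R}^{\la-k}\,\ti_{\la-k-\al}(\tilde{x}\,;y).
\]
Then invoke a mirror identity for $\Ti$-polynomials analogous to Theorem \ref{mirrorW}; its proof parallels that of Theorem \ref{mirrorW}, expanding $\ti_p\cdot\Ti_{\la-k}$ in two ways (via Giambelli with respect to the first row, and via the Pieri rule for theta polynomials) and applying Lemma \ref{basiclemD}. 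The outcome is that the inner $\al$-sum collapses to $\sum_{\nu\rsa(\la-k),\,|\nu|=|\la-k|-p} 2^{m((\la-k)/\nu)}\,\Ti_\nu$.

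The final step is to reindex this sum. I set up the bijection sending $\nu$ to $\mu$, where $\mu$ is obtained from $\nu$ by inserting a single part equal to $k$; its inverse sends $\mu$ of positive type to $\mu-k$. One then checks that $\nu\rsa(\la-k)$ if and only if $\mu\rsa\la$ (with $\mu$ automatically of positive type, having acquired a part equal to $k$), and that $m((\la-k)/\nu) = m(\la/\mu)$ under the bijection. Combining everything and multiplying back by $2^{-\ell_k(\la)}\,e_k(y)$ yields (\ref{reductwTH}).

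The main obstacle will be this final bijective analysis. While the right-box combinatorics in columns $>k$ that determine $N(\la,\mu)$ are clearly insensitive to inserting or deleting a $k$-part in the first $k$ columns, one must carefully examine the type-correction term in the definition of $M$. This is delicate in the subcase where $\la$ has exactly one part equal to $k$ (so that $\la-k$ has non-positive type), since the type indicator for $(\la,\mu)$ can in principle differ from that for $(\la-k,\nu)$; verifying that these effects cancel under the bijection is the heart of the proof.
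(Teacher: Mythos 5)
Your proposal follows essentially the same route as the paper: the paper's entire proof is to combine (\ref{Hwt}) with the reduction formula for the theta polynomial $\Ti_{\la-k}(x\,;y)$, which it simply cites from \cite[Theorem 4]{T} rather than re-deriving via the mirror identity as you do. The reindexing $\nu \leftrightarrow \mu$ under insertion/deletion of the part $k$, which you rightly flag as the delicate point, is exactly the identification the paper treats as immediate in passing from the sum over strips of $\la-k$ to the sum over positive-type $\mu\rsa\la$.
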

\begin{proof}
This follows immediately from (\ref{Hwt}) and the reduction formula
for the theta polynomial $\Ti_{\la-k}(x\,;y)$. For the latter, see
\cite[Theorem 4]{T1}.
\end{proof}

\subsection{Typed $k$-strict partitions and eta polynomials}
A {\em typed $k$-strict partition} $\la$ consists of a $k$-strict
partition $(\la_1, \dots, \la_\ell)$ together with an integer
$\type(\la) \in \{0,1,2\}$, such that $\type(\la)>0$ if and only if
$\la_j=k$ for some index $j$. The type is usually omitted from the
notation for the pair $(\la,\type(\la))$. Suppose that $\la$ is a
typed $k$-strict partition and $R$ is any finite monomial in the
operators $R_{ij}$ which appears in the expansion of the power series
$R^\la$ in (\ref{giambelliCgen}).  If $\type(\la)=0$, then set $R
\star \ti_{\la} = \ti_{R \,\la}$. Suppose that $\type(\la)>0$, let $m$
be the smallest index such that $\la_m=k$, and set $\wh{\al} =
(\al_1,\ldots,\al_{m-1},\al_{m+1},\ldots,\al_\ell)$ for any integer
sequence $\al$ of length $\ell$. If $R$ involves any factors $R_{ij}$
with $i=m$ or $j=m$,\footnote{Note that if $i<m<j$, then the
  factorization $R_{ij}=R_{im}R_{mj}$ is not allowed.} then let $R
\star \ti_{\la} = \frac{1}{2}\,\ti_{R \,\la}$. If $R$ has no such
factors, then let
\[
R \star \ti_{\la} = \begin{cases}
\eta_k \,\ti_{\wh{R \,\la}} & \text{if  $\,\type(\la) = 1$}, \\
\eta'_k \, \ti_{\wh{R \,\la}} & \text{if  $\,\type(\la) = 2$}.
\end{cases}
\]
The {\em eta polynomial} $\Eta_\la=\Eta_\la(x\,;y)$ is the element of
$B^{(k)}$ defined by the raising operator formula

\begin{equation}
\label{Etadef}
\Eta_\la =2^{-\ell_k(\la)}R^{\la} \star \ti_{\la}.  
\end{equation}
We note that, following \cite{BKT3, T2, T3}, the term `eta polynomial'
is used to denote both the Giambelli polynomial (\ref{Etadef}) in the
formal variables $\eta_r$, $\eta'_k$ and its image in the ring
$B^{(k)}$ (we are only concerned with the latter here). The {\em
  type} of the polynomial $\Eta_\la$ is the same as the type of
$\la$. It was shown in \cite[Theorem 4]{BKT3} that the $\Eta_\la$ for
typed $k$-strict partitions $\la$ form a $\Z$-basis of the ring
$B^{(k)}$.


For any typed $k$-strict partition $\la$, we define $\wh{\Eta}_\la$
and $\wt{\Eta}_\la$ as above, by ignoring the type of $\la$. It follows that
the definition (\ref{Etadef}) of $\Eta_\la$ is equivalent to the formula
\begin{equation}
\label{Etaeq}
\Eta_\la = \begin{cases}
\wh{\Eta}_\la & \text{if $\type(\la)=0$}, \\
\frac{1}{2}(\wh{\Eta}_\la + \wt{\Eta}_\la) & \text{if $\type(\la)=1$}, \\
\frac{1}{2}(\wh{\Eta}_\la - \wt{\Eta}_\la) & \text{if $\type(\la)=2$}.
\end{cases}
\end{equation}

\begin{lemma}
\label{zerolem}
Let $\la$ be a typed $k$-strict partition and $\Eta_\la(0\,;y)$ be obtained 
from $\Eta_\la(x\,;y)$ by substituting $x_i=0$ for all $i\geq 1$. Then 
we have
\[
\Eta_\la(0\,;y) = 
\begin{cases} 
0 & \text{if $\la_1>k$}, \\
s_{\la'}(y) & \text{if $\la_1=k$ and $\type(\la)=1$}, \\
0 & \text{if $\la_1=k$ and $\type(\la)=2$}, \\
s_{\la'}(y) & \text{if $\la_1<k$}
\end{cases}
\]
where $\la'$ denotes the partition conjugate to $\la$. 
\end{lemma}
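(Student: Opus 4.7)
The plan is to evaluate $\Eta_\la(x;y)$ at $x=0$ directly from the raising-operator definition in (\ref{Etadef}), using that the generating series $\prod_i(1+x_it)/(1-x_it)$ specializes to $1$ at $x=0$. This gives $q_0(0)=1$ and $q_r(0)=0$ for $r\geq 1$, so $\ti_r(0;y)=e_r(y)$, which equals $e_r(y)$ for $r\leq k$ and vanishes for $r>k$. In particular $\eta_r(0;y)=e_r(y)$ with the same convention, while $\eta'_k(0;y)=\sum_{i=0}^{k-1}P_{k-i}(0)e_i(y)=0$.

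I would first dispose of the case $\la_1>k$. Every raising-operator monomial $R$ appearing in $R^\la$ can only leave $\la_1$ fixed or increase it (via factors $R_{1j}$), so $(R\la)_1\geq \la_1>k$. The factor $\ti_{(R\la)_1}$ then survives either in $\ti_{R\la}$ (terms where $R\star\ti_\la=\tfrac{1}{2}\ti_{R\la}$) or in $\ti_{\wh{R\la}}$ (since $\la_1>k$ forces $m=\ell_k(\la)+1\geq 2$, so the first entry is not removed by the hat). Hence each summand of $R^\la\star\ti_\la$ contains a vanishing factor $\ti_{(R\la)_1}(0;y)=0$, and $\Eta_\la(0;y)=0$.

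When $\la_1\leq k$, all parts of $\la$ are at most $k$, so $\ell_k(\la)=0$ and the condition $\la_i+\la_j\geq 2k+j-i$ fails for all $i<j$, giving $\cC(\la)=\emptyset$ and $R^\la=\prod_{i<j}(1-R_{ij})$. In type $0$ (necessarily $\la_1<k$), formula (\ref{Etadef}) gives $\Eta_\la(0;y)=R^\la\,e_\la(y)=s_{\la'}(y)$ by the dual Jacobi--Trudi identity. When $\la_1=k$, the partition has positive type and $m=1$; monomials $R$ involving any factor $R_{1j}$ again yield $(R\la)_1>k$ and vanish, so only monomials $R$ acting on rows $2,\ldots,\ell$ contribute, producing either $\eta_k\,\ti_{\wh{R\la}}$ or $\eta'_k\,\ti_{\wh{R\la}}$ depending on type. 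Type $2$ vanishes immediately because $\eta'_k(0;y)=0$. In type $1$, summing the surviving contributions yields $e_k(y)\cdot\prod_{2\leq i<j\leq \ell}(1-R_{ij})\,e_{\la-k}(y)=e_k(y)\,s_{(\la-k)'}(y)$.

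The final step is to verify $e_k(y)\,s_{(\la-k)'}(y)=s_{\la'}(y)$ in the $k$ variables $y_1,\ldots,y_k$; I expect this to be the main subtle point. The key observation is that $\la'=(\la-k)'+(1^k)$, since removing one part equal to $k$ decreases each $\la'_c$ (for $c=1,\ldots,k$) by $1$. The Pieri rule expands $e_k\cdot s_{(\la-k)'}=s_{(1^k)}\cdot s_{(\la-k)'}$ as a sum over partitions $\nu$ obtained from $(\la-k)'$ by adding a vertical $k$-strip; in $y_1,\ldots,y_k$ only $\nu$ of length $\leq k$ survive, and since $(\la-k)'$ already has length $\leq k$, the strip must add exactly one box to each of the $k$ rows (placing a box in row $k+1$ would force $\ell(\nu)=k+1$), uniquely producing $\nu=\la'$. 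This yields $\Eta_\la(0;y)=s_{\la'}(y)$ in the $\type(\la)=1$ case and completes the proof.
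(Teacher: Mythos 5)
Your proof is correct, and it reaches the lemma by a genuinely (if mildly) different route than the paper's. The paper works from the decomposition (\ref{Etaeq}): it computes $\wh{\Ti}_\la(0\,;y)=R^\la\,e_\la(y)$, observes that this vanishes when $\la_1>k$ and equals $s_{\la'}(y)$ when $\la_1\le k$ (using $\cC(\la)=\emptyset$), and then settles the two positive-type cases by forming $(\wh{\Eta}_\la\pm\wt{\Eta}_\la)/2=\frac{1}{2}\left(s_{\la'}\pm e_k\,s_{(\la-k)'}\right)$ and invoking the Pieri identity $e_k(y)\,s_{(\la-k)'}(y)=s_{\la'}(y)$. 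You instead unwind the definition (\ref{Etadef}) monomial by monomial: raising-operator monomials that touch the row $m$ with $\la_m=k$ produce a first part exceeding $k$ and die at $x=0$, while the surviving monomials carry a factor $\eta_k(0\,;y)=e_k(y)$ or $\eta_k'(0\,;y)=0$. Both arguments ultimately rest on the same two classical facts, namely $\prod_{i<j}(1-R_{ij})\,e_\la=s_{\la'}$ (dual Jacobi--Trudi) and the Pieri computation $e_k\,s_{(\la-k)'}=s_{\la'}$ in the $k$ variables $y$ --- which you correctly identified as the one step needing care and proved in detail, including the uniqueness of the surviving term. What your organization buys: the type $2$ vanishing is immediate from $\eta_k'(0\,;y)=0$ rather than arising as a cancellation $\frac{1}{2}(s_{\la'}-s_{\la'})$, and your treatment of the case $\la_1>k$ is uniform in the type; in particular it covers the vanishing of the $\wt{\Eta}_\la$ contribution when $\la$ has positive type but $\la_1>k$, a point the paper's proof (which in that case only addresses $\wh{\Eta}_\la$ explicitly) leaves implicit. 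What it costs: you essentially re-derive, at $x=0$, the equivalence between (\ref{Etadef}) and (\ref{Etaeq}) that the paper simply quotes.
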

\begin{proof}
The raising operator definition of $\wh{\Ti}_\la$ gives
\begin{equation}
\label{Ti0la}
\wh{\Ti}_\la(0\,;y) = R^\la\, e_\la(y)
\end{equation}
where $e_\la = \prod_ie_{\la_i}(y)$ and $e_r(y)$ denotes the $r$-th
elementary symmetric polynomial in $y$. Since $e_r(y)=0$ for $r>k$, we
deduce from (\ref{Ti0la}) that $\wh{\Eta}_\la(0\,;y)=0$ unless
$\la_1\leq k$. In the latter case we have $\ell_k(\la)=0$,
$\cC(\la)=\emptyset$, and
\begin{equation}
\label{Eta0}
\wh{\Ti}_\la(0\,;y)=\Ti_\la(0\,;y) = \prod_{i<j}(1-R_{ij})\, e_\la(y) = 
s_{\la'}(y).
\end{equation}
Suppose that $\la_1=k$, so that $\type(\la)>0$. If $\type(\la)=1$, then 
equations (\ref{Hwt}), (\ref{Etaeq}) and (\ref{Eta0}) give 
\begin{align*}
\Eta_\la(0\,;y) & = \frac{1}{2}\left(\wh{\Ti}_\la(0\,;y) + 
e_k(y)\Ti_{\la-k}(0\,;y)\right) \\
& = \frac{1}{2}\left(s_{\la'}(y) + e_k(y)s_{(\la-k)'}(y)\right) 
= s_{\la'}(y),
\end{align*}
since the classical type A Pieri rule gives $e_k(y)s_{(\la-k)'}(y) =
s_{\la'}(y)$. We similarly compute that $\Eta_\la(0\,;y) = 0$ if
$\type(\la)=2$. Finally, if $\la_1<k$ then $\type(\la)=0$ and
$\Eta_\la(0\,;y) = \wh{\Ti}_\la(0\,;y)= s_{\la'}(y)$, using
(\ref{Eta0}) again.
\end{proof}

\subsection{Tableau formulas}
\label{tableaux}
In this subsection we will obtain a description of the eta
polynomial $\Eta_\la(x\,;y)$ as a sum over 
tableaux which are
fillings of the Young diagram of $\la$. We first prove a reduction
formula for the $x$ variables which appear in $\Eta_\la$.

\begin{defn}
If $\la$ and $\mu$ are typed $k$-strict partitions with
$\mu\subset\la$, we write $\mu\rsa\la$ and say that $\la/\mu$ is a
{\em typed $k'$-horizontal strip} if the underlying $k$-strict
partitions are such that $\la/\mu$ is a $k'$-horizontal strip and in
addition $\type(\la)+\type(\mu)\neq 3$. In this case we set
$n(\la/\mu) = N(\A)-1$, where the set $\A$ and integer $N(\A)$ 
are defined as in \S
\ref{Adef}.
\end{defn}

\begin{thm}
\label{reductthmH}
For any typed $k$-strict partition $\la$, we have the reduction formula 
\begin{equation}
\label{reductH}
\Eta_\la(x\,;y) = \sum_{p=0}^\infty \,x_1^p\,
\sum_{{\mu\rsa\la}\atop{|\mu|=|\la|-p}}
2^{n(\la/\mu)} \, \Eta_\mu(\tilde{x}\,;y)
\end{equation}
where $\tilde{x} = (x_2,x_3,\ldots)$ and the inner sum is over typed
$k$-strict partitions $\mu$ with $\mu\rsa\la$ and $|\mu|=|\la|-p$.
\end{thm}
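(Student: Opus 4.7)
The plan is to deduce the theorem from Propositions \ref{reductthm} and \ref{reductthmwTH} via formula (\ref{Etaeq}), which writes $\Eta_\la$ as a $\Z[\frac{1}{2}]$-linear combination of $\wh\Eta_\la$ and $\wt\Eta_\la$, and then to regroup terms on the right using the inverse identities $\wh\Eta_\mu=\Eta_{\mu,1}+\Eta_{\mu,2}$ and $\wt\Eta_\mu=\Eta_{\mu,1}-\Eta_{\mu,2}$ which hold for $\mu$ of positive type (by (\ref{Etaeq}) again). I would split into three cases according to $\type(\la)$ and, within each, into sub-cases according to whether the underlying $k$-strict partition $\mu$ has positive type.

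When $\type(\la)=0$, one has $\Eta_\la=\wh\Eta_\la$, which by Proposition \ref{reductthm} expands as a sum over $k$-strict $\mu\rsa\la$ with coefficient $2^{\wh{n}(\la/\mu)}\wh\Eta_\mu(\tilde{x};y)$. Since $\la$ is not of positive type, the lemma characterizing $\wh{n}$ forces $\wh{n}(\la/\mu)=n(\la/\mu)$ in every case, while $\wh\Eta_\mu$ equals $\Eta_\mu$ or $\Eta_{\mu,1}+\Eta_{\mu,2}$ depending on whether $\mu$ has type zero or positive type. Both typed choices for $\mu$ of positive type are allowed since $\type(\la)+\type(\mu)\leq 2$, and the sum reproduces the right-hand side of (\ref{reductH}).

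When $\type(\la)=1$ (the case $\type(\la)=2$ being analogous with signs reversed), $\Eta_\la=\frac{1}{2}(\wh\Eta_\la+\wt\Eta_\la)$, and I apply Propositions \ref{reductthm} and \ref{reductthmwTH} to the two summands. For $\mu$ not of positive type only the first sum contributes, and the lemma yields $\wh{n}(\la/\mu)=n(\la/\mu)+1$, so the coefficient of $\Eta_\mu$ becomes $\frac{1}{2}\cdot 2^{n(\la/\mu)+1}=2^{n(\la/\mu)}$, with the allowed typed pairing $(\type(\la),\type(\mu))=(1,0)$. For $\mu$ of positive type both sums contribute with common exponent $\wh{n}=n(\la/\mu)$, and $\frac{1}{2}(\wh\Eta_\mu+\wt\Eta_\mu)=\Eta_{\mu,1}$, which keeps only the type-$1$ contribution and thereby enforces the restriction $\type(\la)+\type(\mu)\neq 3$ built into the typed $\rsa$-relation.

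The only real obstacle is the exponent bookkeeping: $\wh{n}(\la/\mu)$ and $n(\la/\mu)$ differ by one exactly when $\la$ has positive type and $\mu$ does not, and this discrepancy is exactly absorbed by the factor $\frac{1}{2}$ coming from (\ref{Etaeq}). Once this cancellation is verified, the coefficient collapses to $2^{n(\la/\mu)}$ uniformly, the type restriction $\type(\la)+\type(\mu)\neq 3$ emerges automatically from the sign pattern in (\ref{Etaeq}), and (\ref{reductH}) follows.
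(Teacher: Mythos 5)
Your proposal is correct and follows essentially the same route as the paper: the paper likewise derives (\ref{reductH}) by combining Propositions \ref{reductthm} and \ref{reductthmwTH} through the decomposition (\ref{Etaeq}), and then invokes (\ref{whn}) to reconcile the exponents $\wh{n}(\la/\mu)$ and $n(\la/\mu)$. The explicit case analysis and exponent bookkeeping you carry out (in particular the factor $\frac{1}{2}$ absorbing the shift $\wh{n}=n+1$ when $\type(\la)>0$ and $\type(\mu)=0$, and the type restriction $\type(\la)+\type(\mu)\neq 3$ emerging from the sign pattern) is exactly what the paper's terse proof leaves to the reader.
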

\begin{proof}
Assume that $\type(\la)=1$; the proof in the other cases is 
similar. Using Definition \ref{Etadef} and equations (\ref{reductTH}) 
and (\ref{reductwTH}) we obtain
\begin{equation}
\label{etared}
\Eta_\la(x\,;y) = \sum_{p=0}^\infty \,x_1^p\,
\sum_{{\mu\rsa\la}\atop{|\mu|=|\la|-p}}
2^{\wh{n}(\la/\mu)-1} \, (\wh{\Eta}_\mu(\tilde{x}\,;y)
+ \wt{\Eta}_\mu(\tilde{x}\,;y))
\end{equation}
where the inner sum is over all $k$-strict partitions $\mu\subset\la$
with $|\mu|=|\la|-p$ such that $\la/\mu$ is a $k'$-horizontal
strip. The result follows by combining equation (\ref{etared}) with
(\ref{whn}) and (\ref{Etaeq}).
\end{proof}

Let {\bf P} denote the ordered alphabet
$\{\wh{1}<\wh{2}<\cdots<\wh{k}<1,1^\circ<2,2^\circ<\cdots\}$.  The symbols
$\wh{1},\ldots,\wh{k}$ are said to be {\em marked}, while the rest are {\em
unmarked}. Suppose that $\la$ is any typed $k$-strict
partition.

\begin{defn}
a) A {\em typed $k'$-tableau} $T$ of shape $\la/\mu$ is a sequence of typed
$k$-strict partitions
\begin{equation}
\label{Teq}
\mu = \la^0\subset\la^1\subset\cdots\subset\la^r=\la
\end{equation}
such that $\la^i/\la^{i-1}$ is a typed $k'$-horizontal strip for
$1\leq i\leq r$.  We represent $T$ by a filling of the boxes in
$\la/\mu$ with unmarked elements of {\bf P} which is weakly increasing
along each row and down each column, such that for each $i$, the boxes
in $T$ with entry $i$ or $i^\circ$ form the skew diagram
$\la^i/\la^{i-1}$, and we use $i$ (resp.\ $i^\circ$) if and only if
$\type(\la^i)\neq 2$ (resp.\ $\type(\la^i)=2$), for every $i\geq 1$.
A {\em standard typed $k'$-tableau} on $\la/\mu$ is a typed
$k'$-tableau $T$ of shape $\la/\mu$ such that the entries
$1,2,\ldots,|\la - \mu|$, circled or not, each appear exactly once in
$T$.  For any typed $k'$-tableau $T$ we define
\[
n(T)=\sum_i n(\la^i/\la^{i-1}) \quad \text{and} \quad
x^T = \prod_i x_i^{m_i}
\]
where $m_i$ denotes the number of times that $i$ or $i^\circ$
appears in $T$.

\medskip
\noin b) A {\em typed $k'$-bitableau} $U$ of shape $\la$ is a
filling of the boxes in $\la$ with elements of {\bf P} which is weakly
increasing along each row and down each column, such that (i) the
unmarked entries form a typed $k'$-tableau $T$ of shape $\la/\mu$ 
with $\type(\mu)\neq 2$, and (ii) the marked entries are a filling 
of $\mu$ which is strictly increasing along each row. We define
\[
n(U)=n(T) \quad \text{and} \quad
(xy)^U= x^T \,\prod_{j=1}^k y_j^{n_j} 
\]
where $n_j$ denotes the number of times that $\wh{j}$ appears in $U$.
\end{defn}

\begin{thm}
\label{Htableauxthm}
For any typed $k$-strict partition $\la$, we have 
\[
\Eta_\la(x\,;y) = \sum_U 2^{n(U)}(xy)^U 
\]
where the sum is over all typed $k'$-bitableaux $U$ of shape $\la$.
\end{thm}
\begin{proof}
Let $m$ be a positive integer, $x^{(m)} = (x_1,\dots,x_m)$, and
let  $\Eta_\la(x^{(m)}\,;y)$ be the result of substituting
$x_i=0$ for $i>m$ in $\Eta_\la(x\,;y)$. It follows from equation
(\ref{reductH}) that
\begin{equation}
\label{mreduct}
\Eta_\la(x^{(m)}\,;y) = \sum_{p=0}^\infty \,x_m^p\,
\sum_{{\mu\rsa\la}\atop{|\mu|=|\la|-p}}
2^{n(\la/\mu)} \, \Eta_\mu(x^{(m-1)}\,;y).
\end{equation}
Iterating equation (\ref{mreduct}) $m$ times produces
\[
\Eta_\la(x^{(m)}\,;y) = \sum_{\mu,T} \,2^{n(T)}\,x^T
\Eta_\mu(0\,;y)
\]
where the sum is over all typed $k$-strict partitions $\mu\subset\la$
and typed $k$-tableau $T$ of shape $\la/\mu$ with no entries greater
than $m$.  We deduce from Lemma \ref{zerolem} that $\Eta_\mu(0\,;y)
=0$ unless $\mu_1\leq k$ and $\type(\mu)\neq 2$, in which case
$\Eta_\mu(0\,;y) = s_{\mu'}(y)$. The combinatorial definition of Schur
$S$-functions \cite[I.(5.12)]{M} states that
\[
s_{\mu'}(y) = \sum_S y^S
\]
summed over all semistandard Young tableaux $S$ of shape $\mu'$ with
entries from $1$ to $k$. We conclude that
\[
\Eta_\la(x^{(m)}\,;y) = \sum_U \,2^{n(U)}\,(xy)^U
\]
summed over all typed $k'$-bitableaux $U$ of shape $\la$ with no
entries greater than $m$. The theorem follows by letting $m$ tend to
infinity.
\end{proof}

\begin{example} Let $k=1$, $\la=(3,1)$ with $\type(\la)=1$, 
and consider the alphabet $\text{\bf
  P}_{1,2}=\{\wh{1}<1,1^\circ<2,2^\circ\}$. There are thirteen typed
$1'$-bitableaux $U$ of shape $\la$ with entries in $\text{\bf
  P}_{1,2}$. The three typed $1'$-bitableaux $\dis \begin{array}{l}
  \wh{1}\,1\,2 \\ \wh{1}
\end{array}$, $\dis \begin{array}{l} \wh{1}\,2\,2 \\ 1
\end{array}$, and $\dis \begin{array}{l} \wh{1}\,1\,2 \\ 1
\end{array}$ satisfy $n(U)=1$, while the ten typed $1'$-bitableaux
\begin{gather*}
\begin{array}{l} 1\,1\,1 \\ 2 \end{array}  \ \ \
\begin{array}{l} 1\,1\,2 \\ 2 \end{array}  \ \ \
\begin{array}{l} 1\,2\,2 \\ 1 \end{array}  \ \ \
\begin{array}{l} 1\,2\,2 \\ 2 \end{array}  \ \ \
\begin{array}{l} \wh{1}\,1\,1 \\ 1 \end{array}  \\
\begin{array}{l} \wh{1}\,1\,1 \\ 2 \end{array}  \ \ \
\begin{array}{l} \wh{1}\,1\,2 \\ 2 \end{array}  \ \ \
\begin{array}{l} \wh{1}\,2\,2 \\ 2 \end{array}  \ \ \
\begin{array}{l} \wh{1}\,1\,1 \\ \wh{1} \end{array} \ \ \
\begin{array}{l} \wh{1}\,2\,2 \\ \wh{1} \end{array} 
\end{gather*}
satisfy $n(U)=0$. If $\Eta_{3,1}$ denotes the eta polynomial indexed
by $\la$, then we deduce from Theorem \ref{Htableauxthm} that
\begin{align*}
\Eta_{3,1}(x_1,x_2\,; y_1) &= (x_1^3x_2+2x_1^2x_2^2+x_1x_2^3) +
(x_1^3+3x_1^2x_2+3x_1x_2^2+x_2^3)\,y_1 \\ &\qquad +
(x_1^2+2x_1x_2+x_2^2)\,y_1^2 \\ &= P_{3,1}(x_1,x_2) +
\left(P_3(x_1,x_2)+P_{2,1}(x_1,x_2)\right)\,y_1 + P_2(x_1,x_2)\,
y_1^2.
\end{align*}
We similarly find that for $\la=(3,1)$ of type $2$, there are six
typed $1'$-bitableaux $U$ of shape $\la$ with entries in $\text{\bf
P}_{1,2}$, namely 
\begin{gather*}
\begin{array}{l} 1\,1\,1 \\ 2^\circ \end{array}  \ \ \
\begin{array}{l} 1\,1\,2^\circ \\ 2^\circ \end{array}  \ \ \
\begin{array}{l} 1^\circ\,2^\circ\,2^\circ \\ 1^\circ \end{array}  \ \ \
\begin{array}{l} 1^\circ\,2^\circ\,2^\circ \\ 2^\circ \end{array}  \ \ \
\begin{array}{l} \wh{1}\,1\,1 \\ 2^\circ \end{array}  \ \ \
\begin{array}{l} \wh{1}\,1\,2^\circ \\ 2^\circ \end{array} 
\end{gather*}
all of which satisfy $n(U)=0$.  If $\Eta'_{3,1}$ denotes the eta
polynomial indexed by $\la$, we deduce from Theorem \ref{Htableauxthm}
that
\begin{align*}
\Eta'_{3,1}(x_1,x_2\,; y_1) 
&= (x_1^3x_2+2x_1^2x_2^2+x_1x_2^3) 
+ (x_1^2x_2+x_1x_2^2)\,y_1 \\
&= P_{3,1}(x_1,x_2) + P_{2,1}(x_1,x_2)\,y_1.
\end{align*}
Notice that $\Eta_{3,1}+\Eta'_{3,1} = \frac{1}{2}\Ti_{3,1}$, and compare
with \cite[Example 7]{T1}.
\end{example}

Theorem \ref{Htableauxthm} motivates the following definition.

\begin{defn}
\label{Edefn}
For $\la$ and $\mu$ any two typed $k$-strict partitions with 
$\mu\subset\la$, let
\[
E_{\la/\mu}(x)=\sum_T 2^{n(T)}\,x^T
\]
where the sum is over all typed $k'$-tableaux $T$ of shape $\la/\mu$.
\end{defn}

\begin{example}
\label{4ex}
(a) There is an involution $j$ on the set of all typed $k$-strict
partitions, which is the identity on partitions of type $0$ and
exchanges type $1$ and type $2$ partitions of the same
shape. If $T$ is a typed $k'$-tableau of shape $\la/\mu$ then we let
$j(T)$ be the typed $k'$-tableau of shape $j(\la)/j(\mu)$ obtained by
applying $j$ to each typed partition $\la^i$ which appears in the
sequence (\ref{Teq}) determined by $T$. Then $j$ is an involution on
the set of all typed $k'$-tableaux and it follows from Definition
\ref{Edefn} that $$E_{\la/\mu}(x) = E_{j(\la)/j(\mu)}(x).$$

\medskip
\noin (b) Suppose that $\mu_i \geq \min(k,\la_i)$ for all $i$. Then
Definition \ref{Edefn} becomes the tableau based definition of skew
Schur $P$-functions found e.g.\ in \cite[III.(8.16)]{M}.  We deduce
that $$E_{\la/\mu}(x)=P_{\la/\mu}(x).$$

\medskip
\noin 
(c) Suppose that $\la_1<k$, so in particular
$\cC(\la)=\emptyset$. Then $n(\la/\mu)$ is equal to the number of
edge-connected components of $\la/\mu$, for any partition
$\mu\subset\la$. It follows from Worley \cite[\S 2.7]{W} that
\[
\sum_T 2^{n(T)}\, x^T = S_{\la/\mu}(x):=
\det(q_{\la_i-\mu_j+j-i}(x))_{i,j}.
\]
We therefore have $E_{\la/\mu}(x) =  S_{\la/\mu}(x)$ and
\[
\Eta_\la(x\,;y) = \sum_{\mu\subset\la}S_{\la/\mu}(x)\,s_{\mu'}(y),
\]
in agreement with \cite[Prop.\ 5.4(a)]{BKT3}.
\end{example}

\medskip
\noin 
(d) Suppose that there is only one variable $x$. Then we have
\[
E_{\la/\mu}(x) = \begin{cases}
2^{n(\la/\mu)}\, x^{|\la-\mu|} & \text{if $\la/\mu$ is a typed
$k'$-horizontal strip}, \\
0 & \text{otherwise}.
\end{cases}
\]

\begin{cor}
{\em (a)} Let $x=(x_1,x_2,\ldots)$ and $x'=(x'_1,x'_2,\ldots)$ be two
sets of variables, and let $\la$ be any typed $k$-strict
partition. Then we have
\begin{equation}
\label{mastereqH}
\Eta_\la(x,x'\,;y) = \sum_{\mu\subset\la} E_{\la/\mu}(x)\,
\Eta_\mu(x'\,;y)\, ,
\end{equation}
\begin{equation}
\label{master2H}
\Eta_\la(x\,;y) = \sum_{\mu\subset\la} E_{\la/\mu}(x)\,
s_{\mu'}(y)\, ,
\end{equation}
and 
\begin{equation}
\label{master3H}
E_\la(x,x') = \sum_{\mu\subset\la} E_{\la/\mu}(x)\,
E_{\mu}(x'),
\end{equation}
where the sums are over all typed $k$-strict partitions $\mu\subset\la$.

\medskip
\noin
{\em (b)} For any two typed $k$-strict partitions $\la$ and $\mu$ with
$\mu\subset\la$, we have
\begin{equation}
\label{EEE}
E_{\la/\mu}(x,x') = \sum_{\nu} E_{\la/\nu}(x)\,
E_{\nu/\mu}(x'),
\end{equation}
where the sum is over all typed $k$-strict partitions $\nu$
with $\mu\subset\nu\subset\la$.
\end{cor}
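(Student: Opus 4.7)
The proof of all four identities follows from Theorem \ref{Htableauxthm} by decomposing typed $k'$-(bi)tableaux at an ``alphabet boundary'' separating the two variable sets. The strategy is to enlarge the ordered alphabet so that the $x'$-symbols occupy the lower unmarked positions and the $x$-symbols the upper unmarked positions (with the marked $y$-symbols still coming first), and to split each (bi)tableau at the unique point where its unmarked entries switch from $x'$-symbols to $x$-symbols.

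For (\ref{mastereqH}), apply Theorem \ref{Htableauxthm} to expand $\Eta_\la(x,x'\,;y) = \sum_U 2^{n(U)}(xx'y)^U$ over typed $k'$-bitableaux $U$ of shape $\la$ in the combined alphabet. Each $U$ corresponds to a chain $\eta = \la^0 \subset \la^1 \subset \cdots \subset \la^r = \la$; let $\nu = \la^s$ denote the intermediate partition at the switch from $x'$-symbols to $x$-symbols. Since $\nu$ sits inside the chain, it is itself a typed $k$-strict partition, and $U$ canonically decomposes as a typed $k'$-bitableau of shape $\nu$ in the $(x',y)$-alphabet together with a typed $k'$-tableau of shape $\la/\nu$ in the $x$-alphabet. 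Additivity of $n(U) = \sum_i n(\la^i/\la^{i-1})$ along the chain forces both $2^{n(U)}$ and the weight $(xx'y)^U$ to factor across this decomposition; grouping by $\nu$ and reapplying Theorem \ref{Htableauxthm} together with Definition \ref{Edefn} yields (\ref{mastereqH}). Equation (\ref{EEE}) is proved by the identical switch-point argument applied to typed $k'$-tableaux of shape $\la/\mu$ in place of bitableaux, with $\nu$ ranging over typed $k$-strict partitions with $\mu\subset\nu\subset\la$.

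Equations (\ref{master2H}) and (\ref{master3H}) are specializations of (\ref{mastereqH}). Setting $y=0$ kills every bitableau containing a marked entry, so $\Eta_\mu(x'\,;0) = E_\mu(x')$ and $\Eta_\la(x,x'\,;0) = E_\la(x,x')$, giving (\ref{master3H}). Setting $x'=0$ yields $\Eta_\la(x\,;y) = \sum_\mu E_{\la/\mu}(x)\,\Eta_\mu(0\,;y)$, and Lemma \ref{zerolem} then evaluates each $\Eta_\mu(0\,;y)$ to produce (\ref{master2H}). The main point of care is to check that the intermediate partition $\nu = \la^s$ in the decompositions for (\ref{mastereqH}) and (\ref{EEE}) inherits a single well-defined type simultaneously compatible with both resulting pieces; this is automatic because $\nu$ occurs inside the original chain and carries its type from there, so no further verification is needed beyond the routine combinatorial bookkeeping.
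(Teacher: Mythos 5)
Your proof is correct, but it runs in a different direction from the paper's. The paper handles this corollary by citing Corollaries 6 and 7 of \cite{T}: there the analogue of (\ref{mastereqH}) is obtained by iterating the one-variable reduction formula --- here Theorem \ref{reductthmH} --- over finitely many variables while keeping $\Eta_\mu(x'\,;y)$ intact as the remainder (exactly the argument that proves Theorem \ref{Htableauxthm} via (\ref{mreduct}), except that one stops before specializing to $\Eta_\mu(0\,;y)$), and the skew identity (\ref{EEE}) then follows by applying (\ref{mastereqH}) twice to $\Eta_\la(x,x',x''\,;y)$ and using that the $\Eta_\mu$ form a $\Z$-basis of $B^{(k)}$, so their coefficients can be equated. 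You instead take the finished tableau formula of Theorem \ref{Htableauxthm} as the starting point and split each (bi)tableau at the alphabet boundary; since $n(U)$ and the monomial weight are additive along the chain (\ref{Teq}), this splitting is a weight-preserving bijection and yields (\ref{mastereqH}) and (\ref{EEE}) directly, with (\ref{master2H}) and (\ref{master3H}) obtained by the same specializations ($x'=0$ plus Lemma \ref{zerolem}, respectively $y=0$) that the paper's route would use. Your approach makes the coproduct structure combinatorially transparent and dispenses with the linear-independence step for (\ref{EEE}); what it costs is the point you pass over with ``enlarge the ordered alphabet'': Theorem \ref{Htableauxthm} is stated for a single sequence of unmarked letters of order type $\omega$, so an alphabet placing all $x'$-letters below all $x$-letters (order type $\omega+\omega$) is not literally covered by it. This is repaired by truncating to $x'=(x'_1,\ldots,x'_{m'},0,0,\ldots)$, using the symmetry of $\Eta_\la$ in its $x$-arguments (clear from its definition through the $P_r(x)$) to list those $m'$ variables first, applying the theorem and your splitting, and letting $m'\to\infty$; this is routine but should be stated, since it is the only place where a limit and the reordering of variables enter, and the paper itself deduces symmetry of $E_{\la/\mu}$ only \emph{after} this corollary. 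Note finally that your $x'=0$ specialization shows the sum in (\ref{master2H}) is effectively supported on $\mu$ with $\type(\mu)\neq 2$ and $\mu_1\leq k$, the remaining terms being killed by Lemma \ref{zerolem}; this is consistent with the bitableau decomposition and is the correct reading of that formula.
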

\begin{proof}
The proof is the same as that for Corollaries 6 and 7 in \cite{T1}.
\end{proof}

Since $\Eta_\la(x,x'\,;y)$ is symmetric in the variables $(x,x')$ and
the $\Eta_\mu(x'\,;y)$ are linearly independent, it follows
immediately from identity (\ref{mastereqH}) that $E_{\la/\mu}(x)$ is a
{\em symmetric function} in the variables $x$.  These functions will
be studied further in the next section.

\section{Stanley symmetric functions and skew elements}
\label{sss}

\subsection{Type D Stanley functions and Schubert polynomials}
\label{ssfns}
Let $\wt{W}_{n+1}$ be the Weyl group for the root system
of type $\text{D}_{n+1}$, and set $\wt{W}_\infty = \bigcup_n \wt{W}_{n+1}$.
The elements of $\wt{W}_{n+1}$ may be
represented as signed permutations of the set $\{1,\ldots,n+1\}$; we
will denote a sign change by a bar over the corresponding entry. The
group $\wt{W}_{n+1}$ is generated by the simple transpositions
$s_i=(i,i+1)$ for $1\leq i\leq n$, and an element $s_0$ which acts on
the right by
\[
(u_1,u_2,\ldots,u_{n+1})s_0 = (\ov{u}_2, \ov{u}_1, u_3 , \ldots, u_{n+1}).
\]
Every element $w\in \wt{W}_\infty$ can be expressed as a product of
$w=s_{a_1}s_{a_2}\cdots s_{a_r}$ of simple reflections $s_i$. If the
length $r$ of such an expression is minimal then the sequence of
indices $(a_1,\ldots, a_r)$ is called a {\em reduced word} for $w$,
and $r$ is called the {\em length} of $w$, denoted $\ell(w)$.  A
factorization $w=uv$ in $\wt{W}_\infty$ is {\em reduced} if
$\ell(w)=\ell(u)+\ell(v)$.

Following \cite{FS, FK1, FK2, L}, we will use the nilCoxeter algebra
$\cW_{n+1}$ of $\wt{W}_{n+1}$ to define type D Stanley symmetric
functions and Schubert polynomials.  $\cW_{n+1}$ is the free
associative algebra with unity generated by the elements
$u_0,u_1,\ldots,u_n$ modulo the relations
\[
\begin{array}{rclr}
u_i^2 & = & 0 & i\geq 0\ ; \\
u_0 u_1 & = & u_1 u_0 \\
u_0 u_2 u_0 & = & u_2 u_0 u_2 \\
u_iu_{i+1}u_i & = & u_{i+1}u_iu_{i+1} & i>0\ ; \\
u_iu_j & = & u_ju_i & j> i+1, \ \text{and} \ (i,j) \neq (0,2).
\end{array}
\]
For any $w\in \wt{W}_{n+1}$, choose a reduced word $a_1\cdots a_\ell$
for $w$ and define $u_w = u_{a_1}\ldots u_{a_\ell}$.  Since the last
four relations listed are the Coxeter relations for $D_{n+1}$, it is
clear that $u_w$ is well defined, and that the $u_w$ for $w\in
\wt{W}_{n+1}$ form a free $\Z$-basis of $\cW_{n+1}$.

Let $t$ be an indeterminate and define
\begin{gather*}
A_i(t) = (1+t u_n)(1+t u_{n-1})\cdots 
(1+t u_i) \ ; \\
D(t) = (1+t u_n)\cdots (1+t u_2)(1+t u_1)(1+t u_0)
(1+t u_2)\cdots (1+t u_n).
\end{gather*}
According to \cite[Lemma 4.24]{L}, for any commuting variables $s$,
$t$ we have $D(s)D(t) = D(t)D(s)$. Set $x=(x_1,x_2,\ldots)$ and
consider the product $D(x):=D(x_1)D(x_2)\cdots$.  We deduce
that the functions $E_w(x)$ in the formal power series expansion
\begin{equation}
\label{defE}
D(x) = \sum_{w\in\wt{W}_{n+1}}E_w(x)\, u_w
\end{equation}
are symmetric functions in $x$. The $E_w$ are the {\em type D Stanley 
symmetric functions}, introduced and studied in \cite{BH, L}.

Let $y=(y_1,y_2,\ldots)$. The Billey-Haiman {\em type D Schubert
polynomials} $\DS_w(x\,;y)$ for $w\in \wt{W}_{n+1}$ are defined by
expanding the formal product
\begin{equation}
\label{defD}
D(x) A_1(y_1)A_2(y_2)\cdots A_n(y_n)
= \sum_{w\in \wt{W}_{n+1}}\DS_w(x\,;y)\, u_w.
\end{equation}
The above definition is equivalent to the one in \cite{BH}.  Observe
that $\DS_w(x\,;y)$ is a polynomial in the $y$ variables, whose
coefficients are formal power series in the $x$ variables.  One checks
that $\DS_w$ is stable under the natural inclusion of $\wt{W}_n$ in
$\wt{W}_{n+1}$, and hence well defined for $w\in \wt{W}_\infty$. We
also deduce the following result from (\ref{defE}) and (\ref{defD}).

\begin{prop}
Let $w\in \wt{W}_\infty$ and $x'=(x'_1,x'_2,\ldots)$. Then we have
\begin{equation}
\label{Exx}
E_w(x,x') = \sum_{uv=w} E_u(x)E_v(x')
\end{equation}
and 
\begin{equation}
\label{besteq}
\DS_w(x,x'\,;y) = \sum_{uv=w}E_u(x)\,\DS_v(x'\,;y)
\end{equation} 
where the sums are over all reduced factorizations $uv=w$ in $\wt{W}_\infty$.
\end{prop}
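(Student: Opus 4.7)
The plan is to read off both identities by comparing coefficients of $u_w$ in natural product factorizations of the power series $D(x)$. The essential input is that the generating function $D(x) = D(x_1)D(x_2)\cdots$ splits as $D(x,x') = D(x)\,D(x')$, simply by the definition of $D(x)$ as an infinite product over the individual variables. Combined with the nilCoxeter algebra relations, this will give both formulas formally.

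For the first identity, I would expand both sides of $D(x,x') = D(x)\,D(x')$ in the $\Z$-basis $\{u_w\}$ of $\cW_{n+1}$. The left side equals $\sum_w E_w(x,x')\, u_w$ by definition (\ref{defE}). The right side equals
\[
\Bigl(\sum_u E_u(x)\, u_u\Bigr)\Bigl(\sum_v E_v(x')\, u_v\Bigr) = \sum_{u,v} E_u(x)\,E_v(x')\, u_u u_v.
\]
The key algebraic fact in $\cW_{n+1}$ is that $u_u u_v = u_{uv}$ when $uv$ is a reduced factorization (i.e.\ $\ell(uv)=\ell(u)+\ell(v)$), and $u_u u_v = 0$ otherwise; this is a standard consequence of the relation $u_i^2=0$ together with the Coxeter braid relations, and follows by induction on length using the Exchange Condition. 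Grouping the sum by $w=uv$ and matching coefficients of $u_w$ yields (\ref{Exx}).

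For the Schubert polynomial identity (\ref{besteq}) I would apply exactly the same strategy to (\ref{defD}). Writing
\[
D(x,x')A_1(y_1)A_2(y_2)\cdots A_n(y_n) = D(x)\bigl[D(x')A_1(y_1)\cdots A_n(y_n)\bigr],
\]
the factor in brackets expands as $\sum_v \DS_v(x';y)\,u_v$ by (\ref{defD}), while $D(x) = \sum_u E_u(x)\,u_u$ by (\ref{defE}). Multiplying and using the same nilCoxeter product rule $u_u u_v = u_{uv}$ for reduced $uv$ and zero otherwise, then comparing with the expansion of the left side via (\ref{defD}), gives (\ref{besteq}).

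I do not expect a serious obstacle: the only non-formal ingredient is the nilCoxeter product rule, which is already implicit in the algebra's definition and in the way $u_w$ was defined via an arbitrary reduced word for $w$. The commutativity of the $D(x_i)$'s (stated above as \cite[Lemma 4.24]{L}) is what makes $D(x)D(x')$ unambiguous and equal to $D(x,x')$, so no issue arises from the order of the two sets of variables. Thus the entire argument reduces to expanding a product in two ways and equating coefficients.
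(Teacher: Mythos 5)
Your proposal is correct and is essentially the argument the paper intends: the paper simply states that the Proposition is ``deduced from (\ref{defE}) and (\ref{defD})'', and the intended deduction is exactly your computation $D(x,x')=D(x)D(x')$ (justified by the commutativity $D(s)D(t)=D(t)D(s)$ from \cite[Lemma 4.24]{L}) together with the nilCoxeter product rule $u_u u_v = u_{uv}$ for reduced factorizations and $u_u u_v=0$ otherwise, followed by comparison of coefficients in the free $\Z$-basis $\{u_w\}$. No gaps; the two treatments coincide.
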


\subsection{The Grassmannian elements of $\wt{W}_\infty$}
\label{wsec}
For $k\neq 1$, an element $w\in \wt{W}_\infty$ is $k$-Grassmannian if
$\ell(ws_i)=\ell(w)+1$ for all $i\neq k$. We say that $w$ is
$1$-Grassmannian if $\ell(ws_i)=\ell(w)+1$ for all $i\geq 2$.  The
elements of $\wt{W}_{n+1}$ index the Schubert classes in the
cohomology ring of the flag variety $\SO_{2n+2}/B$, which contains
$\HH^*(\OG(n+1-k,2n+2),\Z)$ as the subring spanned by Schubert classes
given by $k$-Grassmannian elements.  If $\wt{\cP}(k,n)$ denotes the
set of typed $k$-strict partitions whose diagrams fit inside a
rectangle of size $(n+1-k)\times (n+k)$, then each element $\la$ in
$\wt{\cP}(k,n)$ corresponds to a $k$-Grassmannian element $w_\la \in
\wt{W}_{n+1}$ which we proceed to describe.

The typed $k$-strict partitions $\la$ in $\wt{\cP}(k,n)$ are those
whose Young diagram fits inside the non-convex polygon $\Pi$ obtained
by attaching an $(n+1-k)\times k$ rectangle to the left side of a
staircase partition with $n$ rows.  When $n=7$ and $k=3$, the polygon
$\Pi$ looks as follows.
\[ \Pi \ \ = \ \ \ \raisebox{-36pt}{\pic{.6}{piD}} \] 
The boxes of the staircase partition that are outside $\la$ lie in
south-west to north-east diagonals.  Such a diagonal is called {\em
related} if it is $k'$-related to one of the bottom boxes in the first
$k$ columns of $\la$, or to any box $[0,i]$ for which $\la_1 < i \leq
k$; the remaining diagonals are called {\em non-related}. Let $u_1 <
u_2 < \dots < u_k$ denote the lengths of the related diagonals.
Moreover, let $\la^1$ be the strict partition obtained by removing the
first $k$ columns of $\la$, let $\la^2$ be the partition of boxes
contained in the first $k$ columns of $\la$, and set
$r=\ell(\la^1)=\ell_k(\la)$.
\[ \pic{0.50}{lapair} \]
If $\type(\la)=1$, then the $k$-Grassmannian element corresponding to
$\la$ is given by
\[
w_\la =
(u_1+1,\dots,u_k+1, \ov{(\la^1)_1+1}, \dots, \ov{(\la^1)_r+1},\wt{1},
v_1+1,\dots,v_{n-k-r}+1),
\] 
while if $\type(\la)=2$, then 
\[
w_\la =
(\ov{u_1+1},\dots,u_k+1, \ov{(\la^1)_1+1}, \dots, \ov{(\la^1)_r+1},\wt{1},
v_1+1,\dots,v_{n-k-r}+1),
\] 
where $v_1 < \dots < v_{n-k-r}$ are the lengths of the non-related
diagonals. Here $\wt{1}$ is equal to $1$ or $\ov{1}$, so that the
total number of barred entries in $w_\la$ is even. Finally, if
$\type(\la)=0$, then $u_1=0$, that is, one of the related diagonals
has length zero. In this case
\[
w_\la =
(\wt{1},u_2+1, \dots,u_k+1, \ov{(\la^1)_1+1}, \dots, \ov{(\la^1)_r+1},
v_1+1,\dots,v_{n+1-k-r}+1),
\] 
where $v_1 < \dots < v_{n+1-k-r}$ are the lengths of the non-related 
diagonals. For example, the element $\la
= (8,4,3,2) \in \wt{\cP}(3,7)$ of type $1$ 
corresponds to $w_\la = (3,5,7,\ov{6},\ov{2},1,4,8)$.
\[
\lambda \ = \ \ \raisebox{-53pt}{\pic{.6}{lamdiagD2}}
\]

The element $w_\la\in \wt{W}_\infty$ depends on $\la$ and $k$, but is
independent of $n$. Furthermore, it was proved in \cite[Prop.\
6.3]{BKT3} that
\begin{equation}
\label{TSeq}
\Eta_\la(x\,;y) = \DS_{w_\la}(x\,;y)
\end{equation}
for any typed $k$-strict partition $\la$. The main point of
(\ref{TSeq}) is the expression of $\DS_{w_\la}$ via raising operators,
as in (\ref{Etadef}). Observe also that the equality (\ref{TSeq}) is
taking place in the full ring $B^{(0)}[z]=\Z[P_1(x),P_2(x),\ldots ;
  z_1,z_2,\ldots]$ of type D Billey-Haiman Schubert polynomials, where
there are relations among the generators $P_r(x)$. Moreover, these
relations are used crucially in its proof, which is given in
\cite{BKT3}.

\subsection{The skew elements of $\wt{W}_\infty$} 
The following definition can be formulated for any Coxeter group.

\begin{defn}
An element $w\in \wt{W}_\infty$ is called {\em skew} if 
there exists a typed $k$-strict partition $\la$ (for some $k$)
and a reduced factorization $w_\la = ww'$ in $\wt{W}_\infty$.
\end{defn}

Note that if we have a reduced factorization $w_\la = ww'$ in
$\wt{W}_{n+1}$ for some typed $k$-strict partition $\la\in
\wt{\cP}(k,n)$, then the right factor $w'$ is $k$-Grassmannian, and
therefore equal to $w_\mu$ for some typed $k$-strict partition $\mu\in
\wt{\cP}(k,n)$.

\begin{prop}
\label{skewprop}
Suppose that $w$ is a skew element of $\wt{W}_\infty$, and let 
$\la$ and $\mu$ be typed 
$k$-strict partitions such that the factorization $w_\la = ww_\mu$
is reduced. Then we have
$\mu\subset\la$ and $E_w(x) = E_{\la/\mu}(x)$. 
\end{prop}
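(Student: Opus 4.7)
The plan is to derive the identity by applying the coproduct-style formula (\ref{besteq}) to $w_\la$, converting the type D Schubert polynomials on both sides to eta polynomials via (\ref{TSeq}), and then comparing with the master equation (\ref{mastereqH}) to extract both conclusions simultaneously.

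First I would set $w=w_\la$ in (\ref{besteq}) to obtain
\[
\DS_{w_\la}(x,x'\,;y) = \sum_{uv=w_\la} E_u(x)\,\DS_v(x'\,;y),
\]
where the sum runs over reduced factorizations $w_\la=uv$ in $\wt{W}_\infty$. As noted in the paragraph following the definition of skew element, every such right factor $v$ is automatically $k$-Grassmannian, hence $v=w_\nu$ for a unique typed $k$-strict partition $\nu$, and $u$ is in turn determined by $\nu$. Invoking (\ref{TSeq}) to identify $\DS_{w_\la}=\Eta_\la$ and $\DS_{w_\nu}=\Eta_\nu$, the identity becomes
\[
\Eta_\la(x,x'\,;y) = \sum_{\nu} E_{u_\nu}(x)\,\Eta_\nu(x'\,;y),
\]
the sum being over those typed $k$-strict $\nu$ that admit a (necessarily unique) reduced factorization $w_\la=u_\nu w_\nu$.

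Next I would compare this with (\ref{mastereqH}) and exploit the fact that the polynomials $\Eta_\nu(x'\,;y)$ for typed $k$-strict $\nu$ form a $\Z$-basis of $B^{(k)}$; equating the coefficients of each $\Eta_\nu$ (which are symmetric functions in $x$ on both sides) produces $E_{u_\nu}(x)=E_{\la/\nu}(x)$ whenever both are defined, and forces $\nu\subset\la$ whenever the factorization side is nonempty. Applied to the given $w$, where $\mu$ plays the role of $\nu$ and $w=u_\mu$, this delivers $\mu\subset\la$ and $E_w(x)=E_{\la/\mu}(x)$ in a single stroke.

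The main step requiring care is verifying that any $\nu$ appearing on the factorization side automatically satisfies $\nu\subset\la$. The cleanest approach is via Bruhat order: since $w_\la=u_\nu w_\nu$ is reduced, we have $w_\nu\leq w_\la$ in the Bruhat order on $\wt{W}_\infty$, and for $k$-Grassmannian elements Bruhat order corresponds to containment of the associated $k$-strict Young diagrams. Alternatively one can observe that $E_{u_\nu}$ is a nonzero symmetric function (any reduced word for $u_\nu$ contributes a nonzero monomial), so the coefficient comparison cannot be satisfied by $E_{\la/\nu}=0$ on the right. Either route closes the argument; the remainder is formal bookkeeping driven by (\ref{besteq}), (\ref{TSeq}), and (\ref{mastereqH}).
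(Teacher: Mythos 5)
Your proposal is correct and takes essentially the same route as the paper: both arguments chain together (\ref{besteq}), (\ref{TSeq}), and (\ref{mastereqH}) for $w_\la$, note that every right factor of a reduced factorization of $w_\la$ is of the form $w_\nu$, and then compare coefficients using the $\Z$-basis property of the polynomials $\Eta_\nu(x'\,;y)$ in $B^{(k)}$. Your explicit justification that $E_{u_\nu}(x)\neq 0$ (which is what forces $\nu\subset\la$) is a point the paper leaves implicit in its closing ``the desired result follows,'' but it is the same argument.
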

\begin{proof}
By combining (\ref{TSeq}) with (\ref{mastereqH}) and (\ref{besteq}),
we see that 
\begin{equation}
\label{3eq}
\sum_{\mu\subset\la} E_{\la/\mu}(x)\, \Eta_\mu(x'\,;y) = 
\Eta_\la(x,x'\,; y) = \sum_{uv=w_\la}E_u(x)\,\DS_v(x'\,;y)
\end{equation}
where the first sum is over all typed $k$-strict partitions
$\mu\subset \la$ and the second sum is over all reduced factorizations
$uv=w_\la$. The right factor $v$ in any such reduced factorization 
must equal $w_\nu$ for some typed $k$-strict partition $\nu$,
and therefore $\DS_v(x'\,;y) = \Eta_\nu(x'\,;y)$.  Since the
$\Eta_\nu(x'\,;y)$ for $\nu$ a typed $k$-strict partition form a
$\Z$-basis for the ring $B^{(k)}(x'\,; y)$ of eta polynomials in $x'$
and $y$, the desired result follows.
\end{proof}

\begin{defn}
Let $\la$ and $\mu$ be typed $k$-strict partitions in $\wt{\cP}(k,n)$ with
$\mu\subset\la$.  We say that $(\la,\mu)$ is a {\em compatible pair}
if there is a reduced word for $w_\la$ whose last $|\mu|$ entries form
a reduced word for $w_\mu$; equivalently, if we have $\ell(w_\la
w_\mu^{-1}) = |\la - \mu|$. 
\end{defn}

From Proposition \ref{skewprop} we immediately deduce the next result.

\begin{cor}
\label{compcor}
Let $(\la,\mu)$ be a compatible pair of typed $k$-strict partitions.
Then there is a 1-1 correspondence between reduced factorizations
of $w_\la w_\mu^{-1}$ and typed $k$-strict partitions $\nu$ with 
$\mu\subset\nu\subset\la$ such that $(\la,\nu)$ and $(\nu,\mu)$ are
compatible pairs.
\end{cor}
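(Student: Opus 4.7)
My plan is to construct the bijection explicitly in both directions. Starting from a reduced factorization $uv = w_\la w_\mu^{-1}$, I would rewrite $w_\la = u\cdot(vw_\mu)$, which is automatically reduced by a length comparison, so $vw_\mu$ is a reduced right factor of $w_\la$. I would then argue that $vw_\mu$ is itself $k$-Grassmannian: any right descent $s_i$ of $vw_\mu$ produces a reduced word for $vw_\mu$ ending in $s_i$, which prepended by any reduced word for $u$ yields a reduced word for $w_\la$ ending in $s_i$, and the $k$-Grassmannian property of $w_\la$ then forces $i = k$. Consequently $vw_\mu = w_\nu$ for a unique typed $k$-strict partition $\nu$, and this defines the forward map $(u,v) \mapsto \nu$.

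Next I would verify that this $\nu$ satisfies all required properties. The reduced factorizations $w_\la = u\cdot w_\nu$ and $w_\nu = v\cdot w_\mu$ exhibit both $u$ and $v$ as skew elements, so Proposition \ref{skewprop} applies to give $\nu \subset \la$ and $\mu \subset \nu$. The length identities $\ell(u) = \ell(w_\la) - \ell(w_\nu) = |\la| - |\nu| = |\la - \nu|$ and, analogously, $\ell(v) = |\nu - \mu|$ then show that both $(\la, \nu)$ and $(\nu, \mu)$ are compatible pairs.

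For the inverse direction, given a typed $k$-strict partition $\nu$ with $\mu \subset \nu \subset \la$ such that both pairs are compatible, I would set $u := w_\la w_\nu^{-1}$ and $v := w_\nu w_\mu^{-1}$, so that $uv = w_\la w_\mu^{-1}$ and the compatibility hypotheses yield $\ell(u) + \ell(v) = |\la - \nu| + |\nu - \mu| = |\la - \mu| = \ell(w_\la w_\mu^{-1})$, confirming the factorization is reduced. The two constructions are mutually inverse by construction, since the equation $vw_\mu = w_\nu$ determines $v$ uniquely once $\nu$ is fixed. The only genuinely delicate step is the descent-inheritance argument showing that $vw_\mu$ is $k$-Grassmannian; once this is in place, everything else reduces to routine length bookkeeping and an application of Proposition \ref{skewprop}.
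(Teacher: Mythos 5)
Your proof is correct and takes essentially the same route as the paper, which deduces the corollary from Proposition \ref{skewprop} together with the remark stated just before it (that the right factor of any reduced factorization of $w_\la$ is $k$-Grassmannian, hence equals $w_\nu$ for some typed $k$-strict partition $\nu$); your descent-inheritance argument simply makes that remark explicit, and the rest is the same length bookkeeping. (One pedantic point: for $k=1$ the inherited descent is only forced to lie in $\{0,1\}$ rather than to equal $k$, but this still gives the $1$-Grassmannian property, so nothing breaks.)
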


\begin{thm}
\label{abprop}
Let $\la$ and $\mu$ be typed $k$-strict partitions in $\wt{\cP}(k,n)$ with
$\mu\subset\la$.  Then the following conditions are equivalent: {\em
(a)} $E_{\la/\mu}(x)\neq 0$; {\em (b)} $(\la,\mu)$ is a
compatible pair; {\em (c)} there exists a standard typed $k'$-tableau on
$\la/\mu$. If any of these conditions holds, then
$E_{\la/\mu}(x) = E_{w_\la w_\mu^{-1}}(x)$.
\end{thm}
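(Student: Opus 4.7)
I would establish the cycle of implications $(c)\Rightarrow(a)\Rightarrow(b)\Rightarrow(c)$, together with the claimed identity $E_{\la/\mu}(x)=E_{w_\la w_\mu^{-1}}(x)$. The implication $(c)\Rightarrow(a)$ is immediate: a standard typed $k'$-tableau $T$ on $\la/\mu$ contributes the nonzero term $2^{n(T)}\,x^T$ to $E_{\la/\mu}(x)$, and since all terms in Definition \ref{Edefn} have positive coefficients, no cancellation is possible.

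For $(a)\Rightarrow(b)$ together with the equality, the plan is to combine (\ref{TSeq}), (\ref{mastereqH}), and (\ref{besteq}) to obtain
\[
\sum_{\mu\subset\la} E_{\la/\mu}(x)\,\Eta_\mu(x';y)
\;=\;\DS_{w_\la}(x,x';y)\;=\;\sum_{uv=w_\la} E_u(x)\,\DS_v(x';y).
\]
Since $w_\la$ is $k$-Grassmannian, a length argument shows that any right factor $v$ in a reduced factorization $uv=w_\la$ is itself $k$-Grassmannian, so $v=w_\mu$ for a unique typed $k$-strict partition $\mu$ and $\DS_v=\Eta_\mu$ by (\ref{TSeq}). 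For each fixed $\mu$, the only candidate for $u$ is $u=w_\la w_\mu^{-1}$, and this contributes exactly when $(\la,\mu)$ is a compatible pair. Comparing coefficients of $\Eta_\mu(x';y)$ via the $\Z$-basis property of $\{\Eta_\mu\}$ in $B^{(k)}(x';y)$ proved in \cite{BKT3} then yields
\[
E_{\la/\mu}(x)\;=\;\begin{cases}E_{w_\la w_\mu^{-1}}(x)&\text{if $(\la,\mu)$ is compatible},\\0&\text{otherwise},\end{cases}
\]
which establishes $(a)\Rightarrow(b)$ and the identity simultaneously (and subsumes Proposition \ref{skewprop}).

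For $(b)\Rightarrow(c)$, I would induct on $r=|\la-\mu|$; the base case $r=0$ is trivial. For $r\geq 1$, fix a reduced word $(a_1,\dots,a_r)$ for $w_\la w_\mu^{-1}$ and apply Corollary \ref{compcor} to the reduced factorization $s_{a_1}\cdot(s_{a_2}\cdots s_{a_r})$; this produces a typed $k$-strict partition $\nu$ with $\mu\subset\nu\subset\la$, $|\la-\nu|=1$, $|\nu-\mu|=r-1$, and both $(\la,\nu)$ and $(\nu,\mu)$ compatible. By the identity already established, $E_{\la/\nu}(x)=E_{s_{a_1}}(x)\neq 0$; since $|\la-\nu|=1$, the nonzero expansion of $E_{\la/\nu}$ in typed $k'$-tableaux forces $\la/\nu$ to be a single-box typed $k'$-horizontal strip. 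Prepending this single step to the standard typed $k'$-tableau on $\nu/\mu$ supplied by the inductive hypothesis produces the required standard typed $k'$-tableau on $\la/\mu$.

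The main technical hurdles are twofold. First, the ``right factors of reduced decompositions of a Grassmannian element are Grassmannian'' claim used in the coefficient extraction is a routine application of the Exchange Condition: if $\ell(vs_i)<\ell(v)$ for an index $i$ outside the Grassmannian descent set, then $\ell(w_\la s_i)\leq\ell(u)+\ell(vs_i)<\ell(w_\la)$, contradicting that $w_\la$ is $k$-Grassmannian. Second, the non-vanishing $E_w\neq 0$ for every $w\in\wt{W}_\infty$, needed both for $(b)\Rightarrow(a)$ and to kickstart the induction in $(b)\Rightarrow(c)$, follows from the positive expansion of $E_w$ into Schur $P$-functions proved in \cite{BH, L}.
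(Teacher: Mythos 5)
Your proposal is correct and takes essentially the same route as the paper: combining (\ref{TSeq}), (\ref{mastereqH}) and (\ref{besteq}) and extracting coefficients against the basis $\{\Eta_\mu(x'\,;y)\}$ to get the dichotomy $E_{\la/\mu}=E_{w_\la w_\mu^{-1}}$ for compatible pairs and $E_{\la/\mu}=0$ otherwise, then using Corollary \ref{compcor} together with the single-box computation $E_{s_i}\neq 0$ to match reduced words with chains of one-box typed $k'$-horizontal strips. The only differences are organizational: you run the cycle (c)$\Rightarrow$(a)$\Rightarrow$(b)$\Rightarrow$(c) with an explicit induction where the paper states the chain bijection directly, and your appeal to Schur $P$-positivity for $E_w\neq 0$ is never actually needed in your scheme (only $E_{s_i}\neq 0$ is used, which you and the paper both verify directly).
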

\begin{proof}
Equation (\ref{3eq}) may be rewritten in the form
\begin{equation}
\label{2eq}
\sum_{\mu\subset\la} E_{\la/\mu}(x)\, \Eta_\mu(x'\,;y) =
\sum_{\mu} E_{w_\la w_\mu^{-1}}(x)\,\Eta_\mu(x'\,;y)
\end{equation}
where the second sum is over all $\mu\subset\la$ such that $(\la,\mu)$
is a compatible pair. It follows that
\[
E_{\la/\mu}(x) =
\begin{cases}
E_{w_\la w_\mu^{-1}}(x) & \text{if $(\la,\mu)$ is a compatible pair}, \\
0 & \text{otherwise}.
\end{cases}
\]
Since clearly $E_w(x)\neq 0$ for any $w\in \wt{W}_\infty$, we deduce
that (a) and (b) are equivalent. Suppose now that $(\la,\mu)$ is a
compatible pair with $|\la|=|\mu|+1$, so that $w_\la = s_iw_\mu$ for
some $i\geq 0$. Observe that if $x$ is a single variable, then
$E_{s_i}(x) = x$, if $i\leq 1$, and $E_{s_i}(x)=2x$, if
$i>1$. Therefore $E_{\la/\mu}(x)\neq 0$, and we deduce from Example
\ref{4ex}(d) that $\la/\mu$ must be a typed $k'$-horizontal strip.
Using Corollary \ref{compcor}, it follows that there is a 1-1
correspondence between reduced words for $w_\la w_\mu^{-1}$ and
sequences of typed $k$-strict partitions
\[
\mu = \la^0\subset\la^1\subset\cdots\subset\la^r=\la
\]
such that $|\la^i|=|\la^{i-1}|+1$ and $\la^i/\la^{i-1}$ is a typed
$k'$-horizontal strip for $1\leq i\leq r=|\la-\mu|$. The latter objects 
are exactly the standard typed $k'$-tableaux on $\la/\mu$. This shows that 
(b) implies (c), and the converse is also clear.
\end{proof}

The previous results show that the non-zero terms in equations
(\ref{mastereqH}) and (\ref{master3H}) correspond exactly to the terms
in equations (\ref{besteq}) and (\ref{Exx}), respectively, when 
$w=w_\la$ is a $k$-Grassmannian element of $\wt{W}_\infty$.

\begin{cor}
\label{stdcor}
Let $w\in \wt{W}_\infty$ be a skew element and $(\la,\mu)$ be a
compatible pair such that $w_\la = w w_\mu$.  Then the number of
reduced words for $w$ is equal to the number of standard typed
$k'$-tableaux on $\la/\mu$.
\end{cor}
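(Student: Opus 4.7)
The plan is to derive Corollary \ref{stdcor} by iterating Corollary \ref{compcor} and then applying the single-box case of Theorem \ref{abprop}. Given a reduced word $(a_1,\dots,a_r)$ for $w$, for each $0 \leq j \leq r$ the factorization
\[
w_\la = (s_{a_1} \cdots s_{a_{r-j}})(s_{a_{r-j+1}} \cdots s_{a_r} w_\mu)
\]
is reduced in $\wt{W}_\infty$, so by Corollary \ref{compcor} the right-hand factor equals $w_{\nu^j}$ for a unique typed $k$-strict partition $\nu^j$ with $|\nu^j|=|\mu|+j$, such that both $(\la,\nu^j)$ and $(\nu^j,\mu)$ are compatible pairs. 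Letting $j$ vary produces a saturated chain
\[
\mu = \nu^0 \subset \nu^1 \subset \cdots \subset \nu^r = \la
\]
in which every consecutive pair $(\nu^{j-1},\nu^j)$ is a compatible pair with $|\nu^j|-|\nu^{j-1}|=1$, and in which $w_{\nu^j} = s_{a_{r-j+1}} w_{\nu^{j-1}}$ is a reduced factorization. Conversely, any such chain recovers the sequence of simple reflections step by step, and concatenation yields a reduced word for $w$. This gives a bijection between reduced words for $w$ and saturated chains of compatible pairs from $\mu$ to $\la$.

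To complete the proof, I would identify such chains with standard typed $k'$-tableaux on $\la/\mu$. By Theorem \ref{abprop}, a unit-difference pair $(\nu^{j-1},\nu^j)$ is compatible if and only if $\nu^j/\nu^{j-1}$ is a typed $k'$-horizontal strip: the forward direction is established in the proof of that theorem via Example \ref{4ex}(d), and the reverse holds because any single-box $k'$-horizontal strip trivially admits a standard typed $k'$-tableau, forcing compatibility by the implication (c)$\Rightarrow$(b) of Theorem \ref{abprop}. Hence the saturated chains of compatible pairs from $\mu$ to $\la$ coincide exactly with the standard typed $k'$-tableaux of shape $\la/\mu$ as defined in Subsection \ref{tableaux}, and combining this with the bijection above establishes the corollary.

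No substantial obstacle is anticipated, since the argument is a direct telescoping of results already at our disposal. The only delicate point is bookkeeping: the $i$-th entry of the standard typed $k'$-tableau records the simple reflection $a_{r-i+1}$ that is peeled off at the $i$-th step, so the reduced word is read in reverse along the chain of partitions. Once this indexing convention is fixed, the bijection of Corollary \ref{compcor} and the equivalence of Theorem \ref{abprop} combine cleanly to yield the desired cardinality identity.
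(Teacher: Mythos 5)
Your proposal is correct and follows essentially the same route as the paper: the paper's own proof of Theorem \ref{abprop} already establishes, via iterated use of Corollary \ref{compcor} and the single-box analysis through Example \ref{4ex}(d), the 1-1 correspondence between reduced words for $w_\la w_\mu^{-1}$ and saturated chains of typed $k$-strict partitions whose unit steps are typed $k'$-horizontal strips, i.e.\ standard typed $k'$-tableaux on $\la/\mu$, from which Corollary \ref{stdcor} is immediate. Your bookkeeping of the peeling order (reading the reduced word in reverse along the chain) and your use of the implication (c)$\Rightarrow$(b) for the single-box converse match the paper's argument in substance.
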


Let $\iota$ denote the involution on the set of reduced words which
interchanges the letters $0$ and $1$, for example $\iota(02120) =
12021$. Then the restriction of $\iota$ to the set of reduced words
representing skew elements of $\wt{W}_{\infty}$ corresponds to the
involution $j$ on the set of typed $k'$-tableaux defined in Example
\ref{4ex}(a).

\begin{example}
Let $\la$ be a typed $k$-strict partition with $\type(\la)\in \{0,1\}$
and let $\la^1$ and $\la^2$ be defined as in \S \ref{wsec}. We can form a
standard typed $k'$-tableau on $\la$ by filling the boxes of
$\la^2$, going down the columns from left to right, and then
filling the boxes of $\la^1$, going across the rows from top to
bottom.  When $k=2$ and $\la = (7,6,5,2)$ with $\type(\la)=1$, the
typed $2'$-tableau on $\la$ which results is
\[
\begin{array}{ccccccc}
1 & 5  & 9  & 10 & 11 & 12 & 13 \\
2 & 6  & 14 & 15 & 16 & 17 & \\ 
3 & 7  & 18 & 19 & 20 & & \\
4 & 8  & & & & & 
\end{array}
\]
which corresponds to the reduced word
\[
3\,2\,0\,4\,3\,2\,1\,5\,4\,3\,2\,0\,
4\,3\,2\,1\,5\,4\,3\,2
\]
for the $2$-Grassmannian element $w_\la=23\ov{6}\ov{5}\ov{4}\ov{1}\in
\wt{W}_6$.  We can form a reduced word for the element
$w'_\la=\ov{2}3\ov{6}\ov{5}\ov{4}1$ associated to $\la = (7,6,5,2)$
with $\type(\la)=2$ by applying the involution $\iota$ to obtain
\[
3\,2\,1\,4\,3\,2\,0\,5\,4\,3\,2\,1\,
4\,3\,2\,0\,5\,4\,3\,2.
\]
This last word corresponds to the standard typed $2'$-tableau
\[
\begin{array}{ccccccc}
1 & 5^\circ  & 9^\circ  & 10^\circ & 11^\circ & 12^\circ & 13^\circ \\
2 & 6^\circ  & 14^\circ & 15^\circ & 16^\circ & 17^\circ & \\ 
3 & 7^\circ  & 18^\circ & 19^\circ & 20^\circ & & \\
4 & 8^\circ  & & & & & 
\end{array}.
\]
\end{example}

\begin{example}
Consider the $1$-Grassmannian element $w=3\ov{4}\ov{2}1$ in $\wt{W}_4$ 
associated to the typed $1$-strict partition $\la=(4,2,1)$ of type 
$1$. The following table lists the nine reduced words for $w$ and the 
corresponding standard typed $1'$-tableaux of shape $\la$.
\smallskip

\[
\begin{array}{|cc|cc|cc|} \hline
\text{word} & \text{tableau} & \text{word} & \text{tableau} &
\text{word} & \text{tableau} \\ \hline 
1320321 & 
\begin{array}{cccc}  &&& \\ 
1 & 4 & 5 & 6 \\ 2 & 7 && \\ 3 &&& \\ &&& \end{array} & 
1323021 & 
\begin{array}{cccc} &&& \\ 
1 & 3 & 5 & 6 \\ 2 & 7 && \\ 4 &&& \\ &&& \end{array} &
1232021 & 
\begin{array}{cccc} &&& \\ 
1 & 3 & 4 & 5 \\ 2 & 7 && \\ 6 &&& \\ &&& \end{array} \\ \hline
1230201 & 
\begin{array}{cccc} &&& \\
1 & 2 & 3 & 5 \\ 4 & 7 && \\ 6 &&& \\ &&& \end{array} & 
1230210 & 
\begin{array}{cccc} &&& \\
1^\circ & 2 & 3 & 5 \\ 4 & 7 && \\ 6 &&& \\ &&& \end{array} &
1203201 & 
\begin{array}{cccc} &&& \\
1 & 2 & 3 & 4 \\ 5 & 7 && \\ 6 &&& \\ &&& \end{array} \\ \hline
1203210 & 
\begin{array}{cccc} &&& \\ 
1^\circ & 2 & 3 & 4 \\ 5 & 7 && \\ 6 &&& \\ &&& \end{array} & 
3120321 & 
\begin{array}{cccc} &&& \\
1 & 4 & 5 & 7 \\ 2 & 6 && \\ 3 &&& \\ &&& \end{array} &
3123021 & 
\begin{array}{cccc} &&& \\
1 & 3 & 5 & 7 \\ 2 & 6 && \\ 4 &&& \\ &&& \end{array} \\ \hline
\end{array}  
\]
\end{example}

\smallskip

\begin{cor}
\label{bhcor}
For any two typed $k$-strict partitions $\la$, $\mu$ with $\mu\subset\la$, 
the function $E_{\la/\mu}(x)$ is a nonnegative integer linear
combination of Schur $P$-functions.
\end{cor}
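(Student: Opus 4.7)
The plan is to reduce the statement to the known Schur $P$-positivity of type D Stanley symmetric functions. First I would dispose of the trivial case: if $E_{\la/\mu}(x)=0$, there is nothing to prove, so assume $E_{\la/\mu}(x)\neq 0$. By Theorem \ref{abprop}, this is equivalent to $(\la,\mu)$ being a compatible pair, and in that case the theorem furnishes the identity
\[
E_{\la/\mu}(x) \;=\; E_{w_\la w_\mu^{-1}}(x),
\]
expressing $E_{\la/\mu}(x)$ as a single type D Stanley symmetric function $E_w(x)$, where $w=w_\la w_\mu^{-1}$ is by definition a skew element of $\wt{W}_\infty$.

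The remaining step would be to invoke the Schur $P$-positivity of type D Stanley symmetric functions, which is the principal result of Billey--Haiman \cite{BH} (also reproved combinatorially by Lam \cite{L}): for any $w\in\wt{W}_\infty$, the symmetric function $E_w(x)$ defined by (\ref{defE}) admits an expansion
\[
E_w(x) \;=\; \sum_{\nu} a^w_\nu\, P_\nu(x)
\]
in which the sum is over strict partitions $\nu$ and each coefficient $a^w_\nu$ is a nonnegative integer. Combining this with the previous paragraph would immediately yield the desired $P$-positivity of $E_{\la/\mu}(x)$.

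Since both ingredients are already in hand, I expect no serious obstacle beyond verifying that the hypotheses of Theorem \ref{abprop} are being applied correctly and that the Billey--Haiman/Lam theorem is being cited in the form stated above. There is nothing new to prove here; the content of the corollary is that the apparently more general functions $E_{\la/\mu}(x)$ defined combinatorially in Definition \ref{Edefn} collapse, whenever nonzero, to ordinary Stanley symmetric functions, to which the established positivity theorem applies.
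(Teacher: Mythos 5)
Your proposal is correct and follows exactly the paper's own argument: the paper likewise disposes of the matter by citing the Schur $P$-positivity of type D Stanley symmetric functions from \cite{BH} and \cite{L} and combining it with Theorem \ref{abprop}, which identifies $E_{\la/\mu}(x)$, when nonzero, with $E_{w_\la w_\mu^{-1}}(x)$. Your write-up merely makes explicit the trivial zero case and the compatible-pair hypothesis, which the paper leaves implicit.
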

\begin{proof}
According to \cite{BH} and \cite{L}, the type D Stanley symmetric
function $E_w(x)$ is a nonnegative integer linear combination of Schur
$P$-functions. The result follows from this fact together with Theorem
\ref{abprop}.
\end{proof}

As noted in the introduction, the skew elements of the symmetric group
may be identified with the $321$-avoiding permutations, following
\cite{BJS, Ste1}.  It would be interesting to determine whether the
skew elements in $\wt{W}_\infty$ (and in the hyperoctahedral group)
can also be characterized by pattern avoidance conditions.

\end{document}